\newcommand{\Cheb}{\mathrm{Cheb}}
\newcommand\norm[1]{\left\lVert#1\right\rVert}
\tikzset{
    circ/.style={circle, fill=black, inner sep=2pt, node contents={}}
}
\newtheorem{theorem}{Theorem}
\numberwithin{theorem}{section}
\newtheorem{lemma}[theorem]{Lemma}
\newtheorem{corollary}[theorem]{Corollary}
\theoremstyle{definition}
\newtheorem{definition}[theorem]{Definition}
\newtheorem{remark}[theorem]{Remark}
\numberwithin{equation}{section}
\newcommand{\diam}{\mathrm{diam}}
\crefname{equation}{}{}
\crefname{equation}{}{}
\crefname{figure}{Figure}{Figure}
\crefname{section}{Section}{Section}
\crefname{lemma}{Lemma}{Lemma}
\crefname{proposition}{Proposition}{Proposition}
\crefname{theorem}{Theorem}{Theorem}
\crefname{corollary}{Corollary}{Corollarie}
\crefname{definition}{Definition}{Definition}
\crefname{notation}{Notations}{Notation}
\crefname{remark}{Remark}{Remark}
\crefname{claim}{Claim}{Claim}
\crefname{assumption}{Assumption}{Assumption}
\newcommand{\R}{\mathbb{R}}
\newcommand{\li}{\left}
\newcommand{\re}{\right}
\definecolor{OurRed}{rgb}{1, 0, 0}
\begin{document}

\title{A note on the rate of convergence of integration schemes for closed surfaces}

\date{}
\author[1]{Gentian Zavalani}
\author[2]{Elima Shehu}
\author[3]{Michael Hecht}

\affil[1,3]{{\small Center for Advanced Systems Understanding (CASUS), D-02826 Görlitz, Germany}}
\affil[1,3]{{\small Helmholtz-Zentrum Dresden-Rossendorf, D-01328 Dresden, Germany.}}
\affil[1]{{\small Technische Universit\"{a}t Dresden}}

\affil[2]{{\small Max Planck Institute for Mathematics in the Sciences, Leipzig, Germany}}
\affil[2]{{\small Osnabr\"uck University, Osnabr\"uck, Germany}}

\maketitle

\begin{abstract}
In this paper, we issue an error analysis for integration over discrete surfaces using the surface parametrization presented in \cite{CurvedGrid}  as well as prove why even-degree polynomials utilized for approximating both the smooth surface and the integrand exhibit a higher convergence rate than odd-degree polynomials. Additionally, we provide some numerical examples that illustrate our findings and propose a potential approach that overcomes the problems associated with the original one.
\end{abstract}

\noindent\textbf{Keywords}  numerical integration; surface integrals; convergence rates; closest point projection; Chebyshev-Lobatto nodes.

% \MH{what's with author emails ...corresponding authors marks }
% \GZ{No clue! However, once the paper is accepted, probably they will ask to write things using their standard template. }
\section{Introduction}
 Many applications, including mathematical physics and mathematical biology \cite{YANG09}, require accurate approximation of integrals on curved surfaces. The concept of surface integration is a fundamental procedure in a wide range of numerical methods, including the boundary integral method, the finite element method, the surface finite element method, and the finite volume method. As a result of piecewise linear approximations of the surface and integrand, integration over discrete surfaces (such as surface triangulation) is typically only of first- or second-order accuracy~\cite{Georg98}. To improve the order of accuracy, using the approach presented in \cite{CurvedGrid} a polynomial approximation of the geometry of a smooth closed embedded hypersurface $\mathcal{M}$ is considered, where a local interpolation polynomial of the closest point projection map $\pi$ for each element $T\in \mathcal{T}^{1}_{h}$ will be constructed, with $\mathcal{T}^{1}_{h}$ denoting a conforming triangulation of $\mathcal{M}$. 
 % The mapping $\pi$ is a bijective mapping from the piecewise flat surface to the smooth surface. 

The contribution of this paper is twofold. First, we deliver a parametrization of smooth, closed
surfaces, enabling to numerically approximate surface integrals based on discrete triangulations. That is approximately providing a decomposition $\mathcal{M}=\bigcup_{i=1}^{n}V_{i}$ into non-overlapping regions $V_i$, given as images of regular maps, see Fig.~\eqref{fig.app_frame}
 \begin{equation}\label{sis1}
    \psi_i : \sigma  \longrightarrow V_i \subseteq \mathcal{M}\,, 
 \end{equation}
 where $\sigma$ is a reference simplex defined as $\sigma:=\lbrace\left(s,t\right),0\leq s\leq 1, 0\leq t\leq 1-s\rbrace\,$ in~$\mathbb{R}^2$. 
 % 
 % we provide a smooth parametrization of the surface
 % \MH{again Vi surface}
 % $V_i$ in $\mathbb{R}^3$ such that $\partial_s\psi_{i}(s,t)\times \partial_t\psi_{i}(s,t)\neq 0$ at all points, 
 \begin{figure}[t!]
    \centering
    \begin{tikzpicture}
        % Include the image
        \node[inner sep=0pt] at (0,0) {\includegraphics[clip,width=1.0\columnwidth]{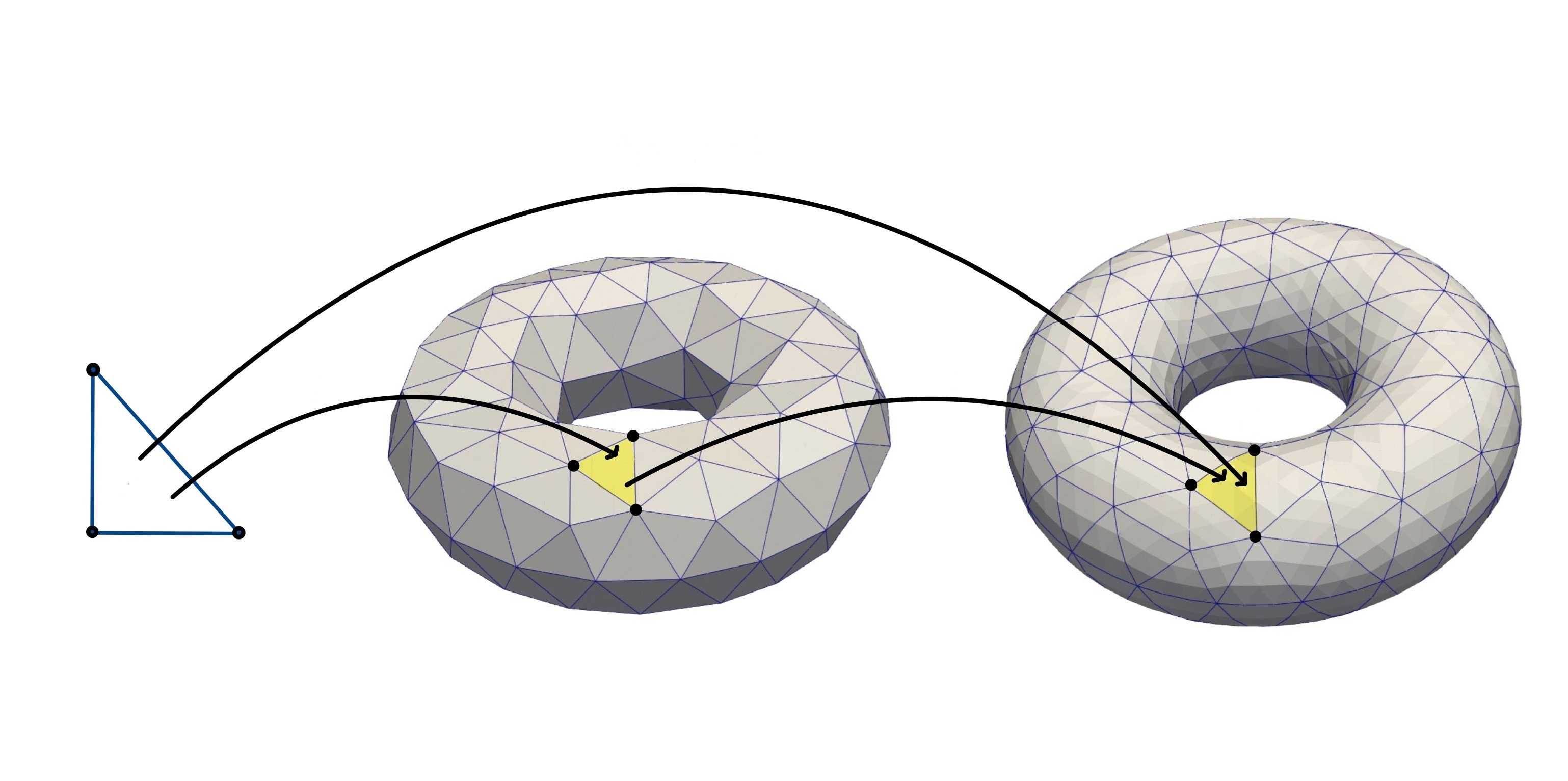}};
        
        % Add text and symbols
    
        \node[anchor=north west] at (1.4,0.5) {$\pi_i$};

        \node[anchor=north west] at (4.7,-0.1) {$V_i\,$};

        \node[anchor=north west] at (-2.5,3.0) {$\psi:=\pi\circ\xi$};
        \node[anchor=north west] at (-2.,0.01) {$T_i$};
        \node[anchor=north west] at (-5.2,0.52) {$\xi$};
        \node[anchor=north west] at (-7.6,-1) {$\sigma$};
    \end{tikzpicture}
  \vspace{-20pt} % Adjust the value to decrease/increase the gap
  \caption{Representation of a smooth surface parametrization, where every region $V_{i}$ forms a curved triangle.}
  \label{fig.app_frame}
\end{figure} 

Consequently, the surface integration problem becomes:
\begin{equation}
        \int_{\mathcal{M}}fdS =\sum_{i=1}^{n}\int_{V_i}fdS=\sum_{i=1}^n\int_{0}^1\int_{0}^{1-s} f\big(\psi_i(s,t)\big)g_{i}(s,t)dsdt\,,
\end{equation}
where $g_{i}(s,t) = \sqrt{\det(J^{T}J)} $ with $J = [\partial_{s}\psi_{i}(s,t) , \partial_{t}\psi_{i}(s,t) ]$ and 
$\partial_{s}\psi_{i}(s,t),\,\partial_{t}\psi_{i}(s,t)$ denoting the partial derivatives of $\psi_i$ with respect to $s$ and $t$. We propose to approximate the function $f$ and each parametrization
 $\psi_i$ by polynomials of degree $k \in \mathbb{N}$. Consequently, the integral is approximated by numerically computing
 \begin{equation}\label{sis.1}
  \sum_{i=1}^n\int_{0}^1\int_{0}^{1-s} Q_{f,k}\big(\psi_i(s,t)\big)  \Tilde{g}_{i}(s,t)dsdt\,,
\end{equation}
where $\Tilde{g}_{i}(s,t)=\sqrt{\det(\Tilde{J}^{T}\Tilde{J})}$ with $J = [\partial_{s}Q_{\psi_{i},k}(s,t) , \partial_{t}Q_{\psi_{i},k}(s,t) ],$ and $Q_{\psi_{i},k}$ denoting a $k-$th order polynomial approximating the map $\psi_i$, whereas $Q_{f,k}$ is a $k-$th order polynomial approximating the integrand $f$.

The second part of our contribution is motivated by recent results of Ray et al. \cite{Ray2012},  where a stabilized least squares approximation, a blending procedure based on linear shape functions, and high-degree quadrature rules are combined into a method for integration over discrete surfaces. This method has an accuracy order of $\mathcal{O}\left(h^k+h^{5}\right)$ with $k$ denoting the polynomial degree and $h>0$ the mesh size 
(\cite{Ray2012}, Theorem 1). Based on their numerical experiments, they observe that even-degree polynomials exhibit a higher convergence rate than predicted by their theory. The aim of this paper is to publicize this phenomenon and expand our understanding of it with the aid of new numerical experiments (section 4) and a new Theorem \ref{main.thm}, which also provides a detailed analysis of errors for integration over discrete triangulated surfaces. 
% Among the methods that can benefit from such an error analysis are the finite element method, the surface finite element method, the boundary integral method, and the finite volume methods. 
 
 The article is structured as follows. Section 2 gives a mathematical description of the surface parametrization introduced in \cite{CurvedGrid}. In Section 3, we analyze the error using this approach and justify why even-degree polynomials exhibit higher convergence rates. In section 4 we conclude with some numerical examples to illustrate our findings and propose an alternative approach that overcomes some of the disadvantages of the original method.
Codes for reproducing the results of this manuscript are summarized and available in the repository \url{https://github.com/zavala92/code_paper}.

% \MH{This should become a institutional repo ...via CASUS}
\section{Polynomial Approximation for Closed Surfaces}\label{sec.1}

% \MH{We may discuss this first paragraph .... its a bit complicated to read.}
Let $\mathcal{M}$ be a smooth connected, orientable closed hypersurface, with smooth hypersurface we mean a $\mathrm{C}^{\infty}$ topological manifold that is second-countable, Hausdorff, and locally Euclidean of dimension 2, which is embedded in an ambient space of dimension 3. According to the Jordan–Brouwer decomposition theorem \cite{doi:10.1080/00029890.1988.11971963},  $\mathcal{M}$ divides $\mathbb{R}^{3}$ into an interior
and an exterior domain and we denote by $d$ the signed distance function to $\mathcal{M}$ oriented in such
a way that $d>0$ in the exterior, $d<0$ in the interior of $\mathcal{M}$. Let us denote the outward unit normal of $\mathcal{M}$ with $n(x)=\nabla d(x)$, where $ \nabla$ is the standard gradient in $\mathbb R^{3}$ (see \cite{Demlow09} for more details). 
Given $\mathcal{M}\subset \mathbb{R}^3$ for each point $x \in \mathcal{M}$ the tangent space $T_{x}\mathcal{M}\subseteq \mathbb{R}^3$ is a linear subspace. 
 The disjoint union of tangent spaces to $\mathcal{M}$ is the tangent bundle of $\mathcal{M}$:
\begin{equation*}
    T\mathcal{M}:=\amalg_{x\in \mathcal{M}}T_{x}\mathcal{M}=\{\left(x,y\right)|\, x\in\mathcal{M}\,\text{and}\, y\in T_{x}\mathcal{M}\}.
\end{equation*}
The normal space of $\mathcal{M}$ in $\mathbb{R}^3$ at $x$ is the orthogonal complement of the tangent space $T_{x}\mathcal{M}$, namely $N_{x}\mathcal{M}=\left(T_{x}\mathbb{R}^{3}\right)^{\perp}.$ The normal bundle is a smooth embedded submanifold of  $\mathbb{R}^3\times\mathbb{R}^3$ of dimension $3$, 
\begin{equation*}
    N\mathcal{M}:=\amalg_{x\in \mathcal{M}}N_{x}\mathcal{M}=\{\left(x,z\right)|\, x\in\mathcal{M}\,\text{and}\, z\in N_{x}\mathcal{M}\}.
\end{equation*}
Let $\delta:\mathcal{M}\longrightarrow \mathbb{R}_{+}$ be a positive, continuous function. Consider the following open tubular neighborhood of the normal bundle:
\begin{equation*}
    N_{\delta}=\{\left(x,y_{x}\right) \mid\, \norm{y_{x}}< \delta\left(x\right)\},
\end{equation*}
where $y_{x}$ is normal vector attached at $x$. The map $\mathcal{F}:N\mathcal{M}\longrightarrow \mathbb{R}^{3}$
$\left(x, y\right)\mapsto x+y$ is smooth and there
exists a $\delta$ such that the restriction $\mathcal{F}|_{N_{\delta}}$ becomes a diffeomorphism onto its image \cite{lee2013smooth}. Consequently, $\mathcal{N}_{\delta} = \mathcal{F}\left(N_{\delta}\right)$ is a $3-$dimensional, open, smooth,
embedded submanifold of $\mathbb{R}^{3}$  that forms a tubular neighborhood of $\mathcal{M}$.  

We will assume that we have a polyhedral surface $\mathcal{M}_h$ 
in Euclidean three-space, which is defined to be a compact subset $\mathcal{M}_h\subseteq \mathbb{R}^{3}$ homeomorphic to $\mathcal{M}$. This discrete surface is composed of finitely many triangles whose vertices are located in $\mathcal{M}$,  ensuring that each edge is contained in a certain (affine) line and each face is contained in a certain (affine) plane:
\begin{equation*}
    \mathcal{M}_h=\bigcup_{i=1}^{n}T_{i},\quad \mathcal{T}^{1}_{h}=\bigcup_{i=1}^{n}\{T_{i}\},
\end{equation*}
where each triangle $T_i$ is parameterized over a reference and  $\mathcal{T}^{1}_{h}$ is a collection of flat triangles with mesh size $h=\max_{T_{i}\in\mathcal{T}^{1}_{h}} \diam\big(T_{i}
\big)$. The collection $\mathcal{T}^{1}_{h}$ is called a conforming triangulation if for any $T_i, T_j\in \mathcal{T}^{1}_{h} $ with $T_i\neq T_j$ the intersection $T_i\cap T_j$ is either empty or a proper k-sub-simplex of $T_i\, \left(k<2\right)$. Let assume that $\mathcal{M}_h$ is contained in the tubular neighborhood $\mathcal{N}_{\delta}$. Under these conditions, we define a unique nonlinear  closest point projection map:
\begin{equation}\label{eq:closmp}
    \pi :\mathcal{N}_{\delta}	\supseteq\mathcal{M}_{h}\longrightarrow \mathcal{M}\subset \mathbb R^{3}
\end{equation}
of the form
\begin{equation*}
    \pi\left(x\right)=x-d\left(x\right)n\left(x\right)\,,
\end{equation*}
which assigns to every $x\in \mathcal{M}_{h}$ the closest point on $\mathcal{M}$, so that $\left(x-\pi(x)\right)\perp T_{\pi(x)}\mathcal{M},\,\, \forall x\in \mathcal{M}_{h}$. We assume that $\mathcal{N}_\delta$ has the additional property that for each point $x\in \mathcal{M}_{h}\subseteq \mathcal{N}_\delta$ there is a unique point $\pi(x)$ on $\mathcal{M}$ that minimizes the distance from $\mathcal{M}$ to $x$. 
% Such tubular neighborhoods always exist by suitably restricting of $\delta$. Since $\mathcal{M}$ is smooth, it is possible to choose $\delta> 0$ such that
% \begin{equation*}
    % \mathcal{N}_{\delta}=\{x\in \mathbb R^{3}\, \mid\, dist\left(x,\mathcal{M}\right)< \delta\},
% \end{equation*}
% where $dist\left(x,\mathcal{M}\right)$ is the Euclidean distance between $x$ and $\mathcal{M},$ and $\delta$ bounded by the reciprocal of the maximum over $\mathcal{M}$ of the moduli of all principal curvatures.
In other words, the computation of the closest point projection \eqref{eq:closmp} is a local minimizer problem \cite{Breiding_2021} in the sense: 
\begin{equation}
\pi:\mathcal{M}_{h}\ni x\mapsto  \mathop{argmin}_{y\in \mathcal{M}}\norm{ y-x},
\end{equation}
where the nonlinear projection  maps a
point $x\in \mathcal{M}_{h}$ to the point on $\mathcal{M}$ that minimizes the distance~to~$x$.

The accuracy of standard surface integration methods is limited to only first or second order due to the use of piecewise linear approximations of the surface geometry and the integrand. 
To obtain high order accuracy, we must construct a high order approximation, both, to the geometry of the surface and to the integrand. 

By relying on \cite{CurvedGrid} we give now the construction of a
surface $\mathcal{M}^{k}_{h}$,  which is locally parametrised over the reference simplex $\sigma$ by polynomials of degree $k$, interpolating the smooth surface $\mathcal{M}$. As aforementioned, we first consider a piecewise flat triangulation $\mathcal{M}_h$ of the smooth surface with vertices lying on~$\mathcal{M}$. To simplify the notation, the subscripts from the transformation map \eqref{sis1} and the triangulation of $\mathcal{M}_{h}$ are dropped. For any triangle $T\in\mathcal{T}^{1}_{h}$  with vertices $q_{1},q_{2},q_{3}\in\mathbb{R}^{3}$ we define the following map
\begin{equation*}
\xi:\sigma \longrightarrow T, \quad
    q_{i}=\xi\left(\hat{p_{i}}\right),\quad 1\leq i\leq 3 \quad \xi\left(s,t\right)= q_{1} +\big(q_{3}-q_{1}\big)s+\big(q_{2}-q_{1}\big)t
\end{equation*}
to be the affine linear parametrization which maps each vertex $\hat{p_{i}}$ of $\sigma$ to the vertex $q_{i}$ of $T.$  We denote the images of the non-vertex  nodes of $\sigma$ on the simplex $T$ by $\Bar{q_{i}}=\xi\left(\hat{p_{i}}\right),\,3< i\leq N\left(2,k\right)$, where $N\left(2,k\right)$ is the dimension of the vector space $\mathcal{P}_{2,k}\left(\sigma\right)$ of bivariate polynomials of degree $k$. Let~$\mathcal{L}^k_1\left(\hat{p}\right),\,\mathcal{L}^k_2\left(\hat{p}\right),\ldots,\mathcal{L}^k_{N\left(2,k\right)}\left(\hat{p}\right)$ be the local Lagrange basis functions of degree $k$ on $\sigma$ corresponding to the nodal points $\hat{p}_{1},\ldots \hat{p}_{N\left(2,k\right)}$. Set
\begin{equation*}
    p_{i}:=\pi\left(\Bar{q_{i}}\right)=\big(\pi\circ \xi\big)\left(\hat{p_{i}}\right)=\psi\left(\hat{p_{i}}\right),\quad \text{where}\quad \psi:=\pi\circ\xi,\quad 3< i\leq N\left(2,n\right)
\end{equation*}
and  define $Q_{\psi,k}$ to be a $k-$th order polynomial interpolation of the mapping $\psi$:
\begin{equation}\label{interp.}
    \mathcal{I}_{k}:C^{0}\big(\sigma,\mathbb{R}\big)\longrightarrow P_{k}\left(\sigma\right),\quad     \psi\mapsto Q_{\psi,k},\quad Q_{\psi,k}\left(\hat{p}\right):=\sum_{i=1}^{N\left(2,n\right)} p_{i}\mathcal{L}_{i}^{k}\left(\hat{p}\right)
\end{equation}
\begin{figure}[h]
\centering
    \begin{tikzpicture}
        % Include the image
        \node[inner sep=0pt] at (0,0) {\includegraphics[clip,width=1.0\columnwidth]{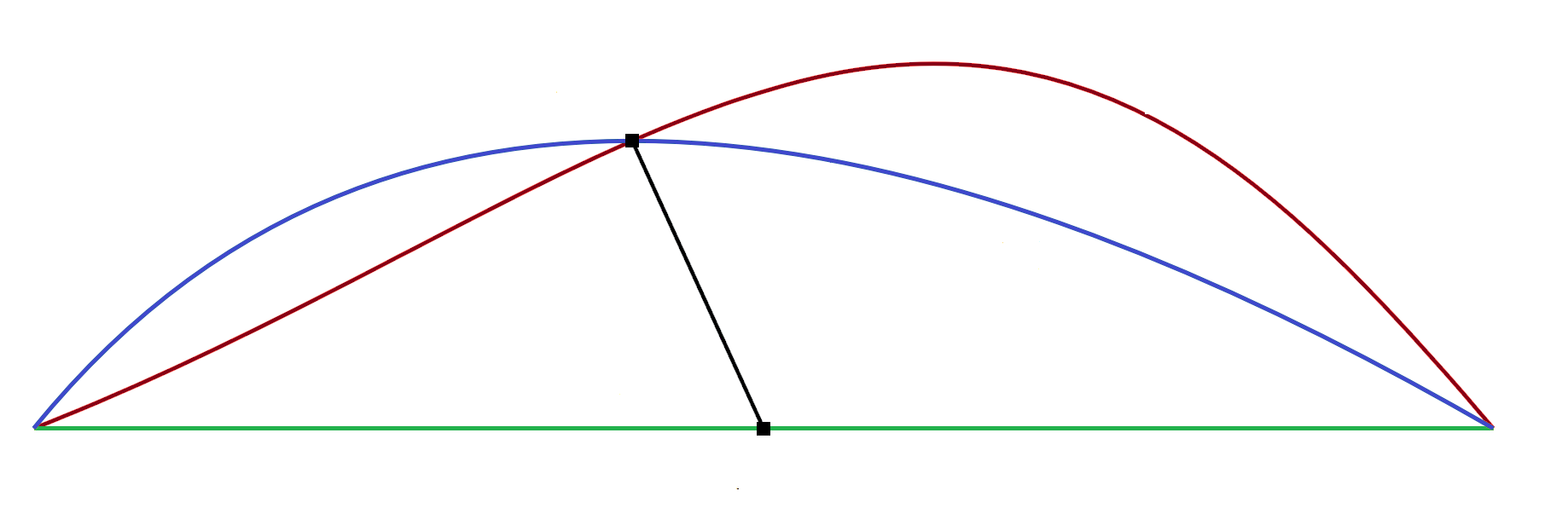}};
        
        % Add text and symbols
    
        \node[anchor=north west] at (1.4,0.5) {$\mathcal{M}^{2}_{h}$};

        \node[anchor=north west] at (2.7,2.5) {$\mathcal{M}$};

        \node[anchor=north west] at (-3,2.2) {$p_i=\pi\left(\bar{q_{i}}\right)$};
        \node[anchor=north west] at (-0.3,-2) {$\bar{q_{i}}$};
        \node[anchor=north west] at (-2.3,-1.2) {$\mathcal{M}_{h}$};
    \end{tikzpicture}
\caption{ Construction of the second order approximation of the smooth surface $\mathcal{M}^{2}_{h}$ (blue line). A simplex of a ‘base’ triangulation $\mathcal{M}_{h}$ (green line) is shown. The interpolation nodes, here the center $\bar{q_{i}}$ of an edge, are projected (grey line) onto the smooth surface $\mathcal{M}$ (red line) via the projection $\pi$. The projected nodal points $\pi\left(\bar{q_{i}}\right)$ and the vertices of $\mathcal{M}_{h}$ are then interpolated, giving the second order approximation of the smooth surface $\mathcal{M}^{2}_{h}$.}
\label{fig.projection}
\end{figure}\\
Now, 
\begin{equation*}
    Q_{\psi,k}:\sigma \rightarrow \Tilde{T}^{k}:= Q_{\psi,k}\big(\sigma\big)
\end{equation*}
defines a polynomial mapping 
 $Q_{\psi,k}$ by interpolating the points $p_i\in \mathcal{M}$ with the
Lagrange-polynomials $\mathcal{L}^k_1\left(\hat{p}\right),\,\mathcal{L}^k_2\left(\hat{p}\right),\ldots,\mathcal{L}^k_{N\left(2,k\right)}\left(\hat{p}\right).$
Thus, for every simplex $T\in\mathcal{T}^{1}_{h}$ we compute the projection $\pi(\bar{q}_{i})$ and define an isoparametric simplex $\Tilde{T}^{k}$ by applying Lagrange interpolation of order $k$ to the coordinates of the projected equidistant nodes (see Fig~\eqref{fig.projection}). 
% i.e., $\Tilde{T}^{k}:=\mathcal{I}_{k}\left(T\right)\,$ interpolates the surface $\mathcal{M}$ in its equidistant nodes, where $\Tilde{T}^{k}$ denote the  $k-$th order approximation $T$.
Furthermore,  if the base-triangulation $\mathcal{T}^{1}_h$ is fine enough, then the map $Q_{\psi,k}$ is a diffeomorphism. By differentiating the interpolation polynomial of the map $\psi$, we obtain:
\begin{equation}\label{jac}
    \partial_{s}
    Q_{\psi,k}\left(\hat{p}\right):=\sum_{i=1}^{N\left(2,n\right)} p_{i}\partial_{s}\mathcal{L}_{i}^{k}\left(\hat{p}\right),\quad     \partial_{t}
    Q_{\psi,k}\left(\hat{p}\right):=\sum_{i=1}^{N\left(2,n\right)} p_{i}\partial_{t}\mathcal{L}_{i}^{k}\left(\hat{p}\right).
\end{equation}
The Jacobian of the transformation is calculated using the equation \eqref{jac}. Given that $Q_{\psi,k}$ is a ~diffeomorphism $\forall\, T\in \mathcal{T}^{1}_{h}$, then  by union of non-overlapping mapped elements:
\begin{equation}
    \mathcal{M}_{h}^{k}:=\bigcup_{T\in \mathcal{T}^{1}_{h} }Q_{\psi,k}\left(T\right)=\bigcup_{T\in \mathcal{T}^{1}_{h},\,\hat{p}\in \sigma}\sum_{i=1}^{N\left(2,n\right)} \pi\big( \xi\left(\hat{p_{i}}\right)\big)\mathcal{L}_{i}^{k}\left(\hat{p}\right).
\end{equation}
% \begin{equation}
%     \mathcal{M}_{h}^{k}:=\bigcup_{T\in \mathcal{T}^{1}_{h} }\mathcal{I}_{k}\left(T\right)=\bigcup_{T\in \mathcal{T}^{1}_{h},\,\hat{p}\in \sigma}\sum_{i=1}^{N\left(2,n\right)} \pi\big( \xi\left(\hat{p_{i}}\right)\big)\mathcal{L}_{i}^{k}\left(\hat{p}\right).
% \end{equation}
Therefore, we have successfully obtained a $k$-th order discrete approximation $\mathcal{M}_{h}^{k}$ of the continuous surface $\mathcal{M}$. Fig.~\eqref{fig:L.param} depicts an illustration of this procedure applied to a torus and a sphere. In the specific scenario where $k=1$, we denote the discrete approximation as $\mathcal{M}_{h}=\mathcal{M}_{h}^{1}$.
\begin{figure}[!t]
\begin{subfigure}{.5\textwidth}
  \centering
  
  \includegraphics[scale=0.45]{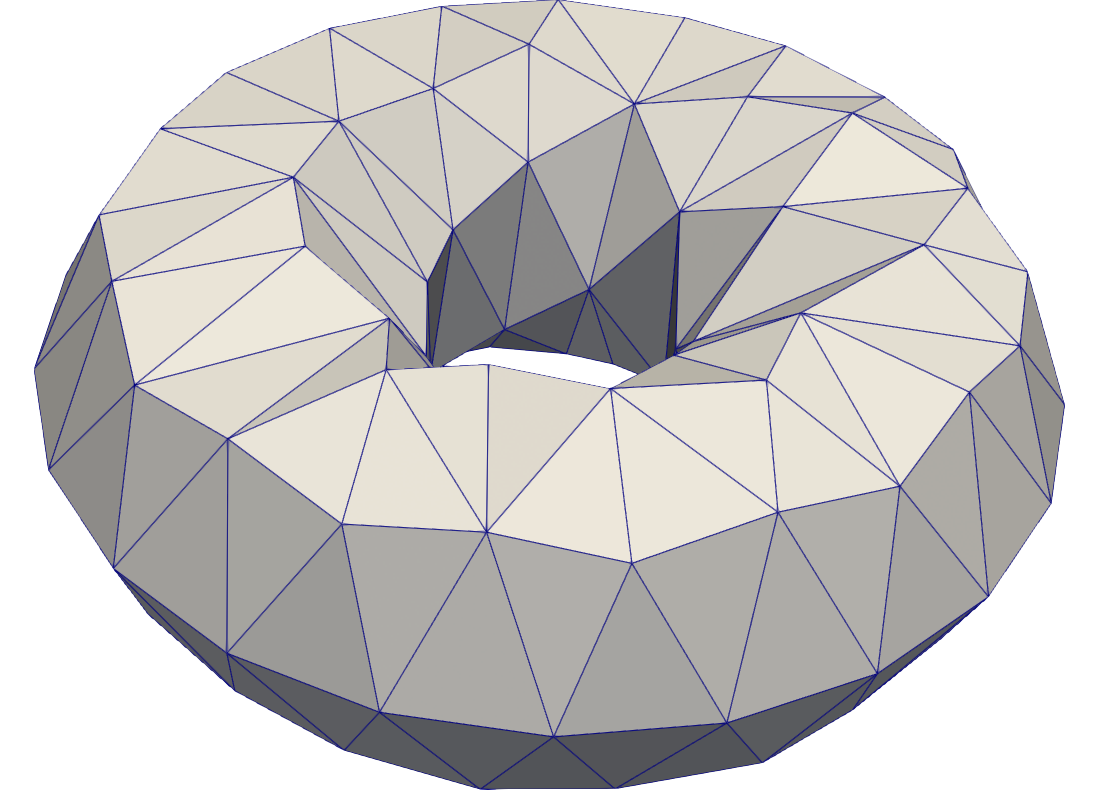}
  \hfill
  \caption{$\mathcal{M}_{h}$}
\end{subfigure}%
\begin{subfigure}{.5\textwidth}
  \centering
  
 \includegraphics[scale=0.45]{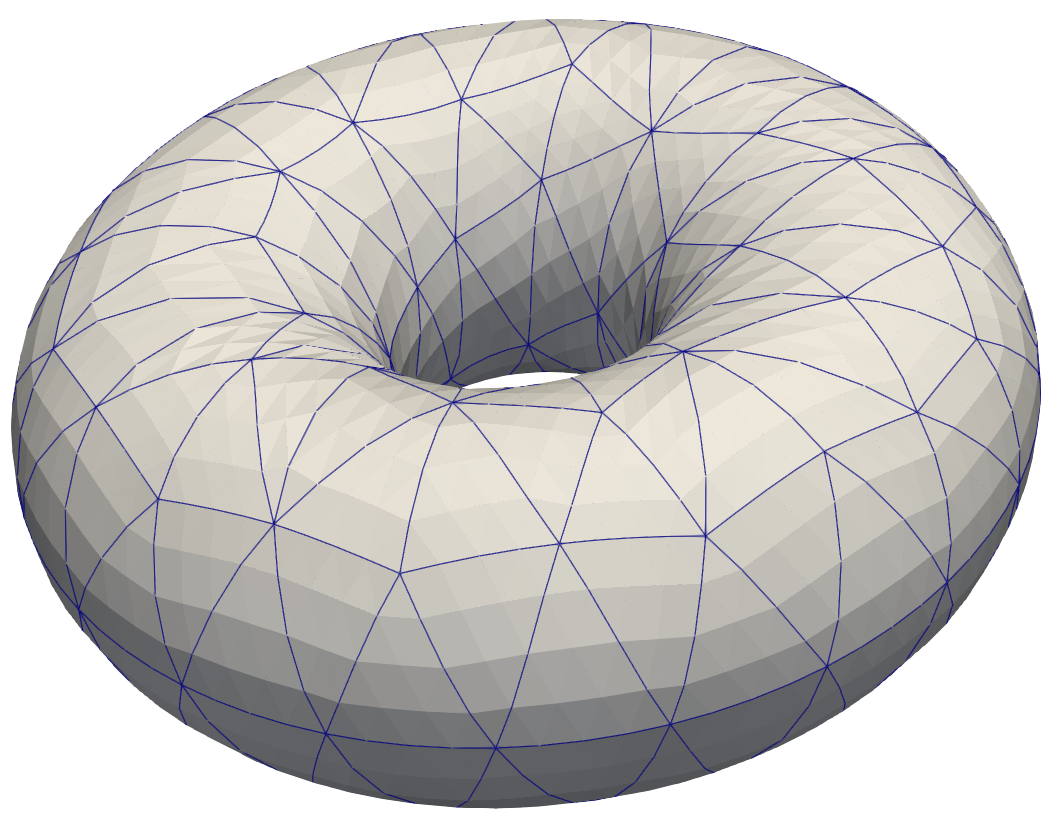}
 \hfill
  \caption{$\mathcal{M}_{h}^{4}$}
\end{subfigure}
\begin{subfigure}{.5\textwidth}
  \centering

  \includegraphics[scale=0.45]{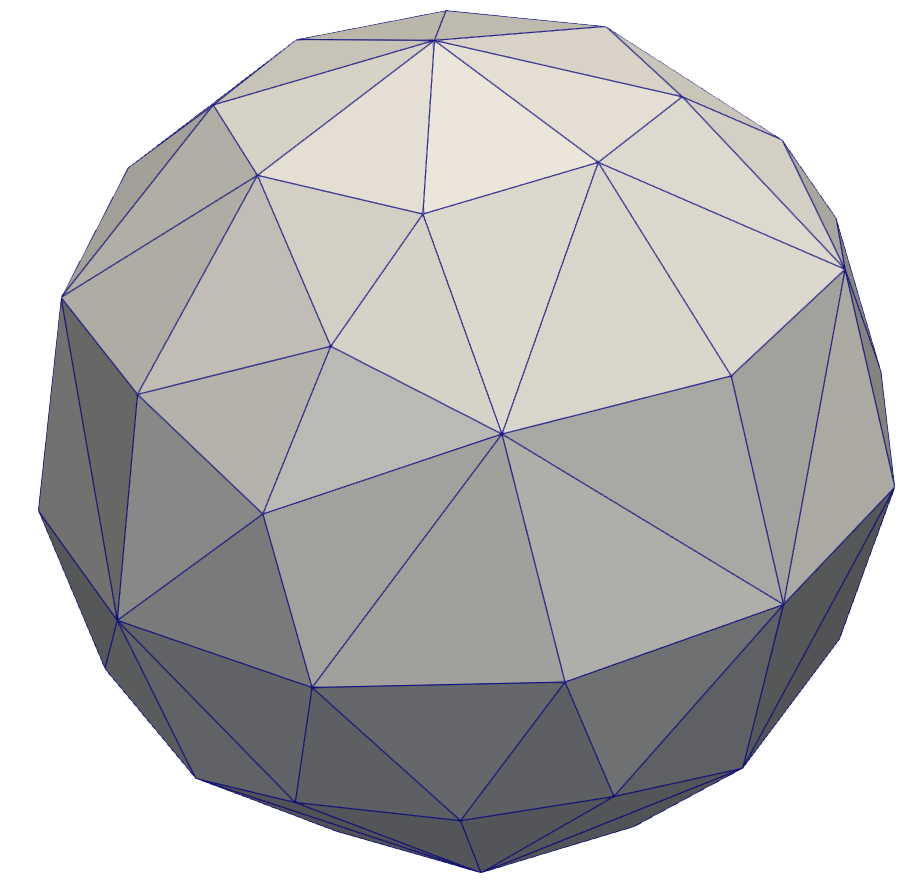}
  \hfill
  \caption{$\mathcal{M}_{h}$}
\end{subfigure}%
\begin{subfigure}{.45\textwidth}

  \centering
 \includegraphics[scale=0.45]{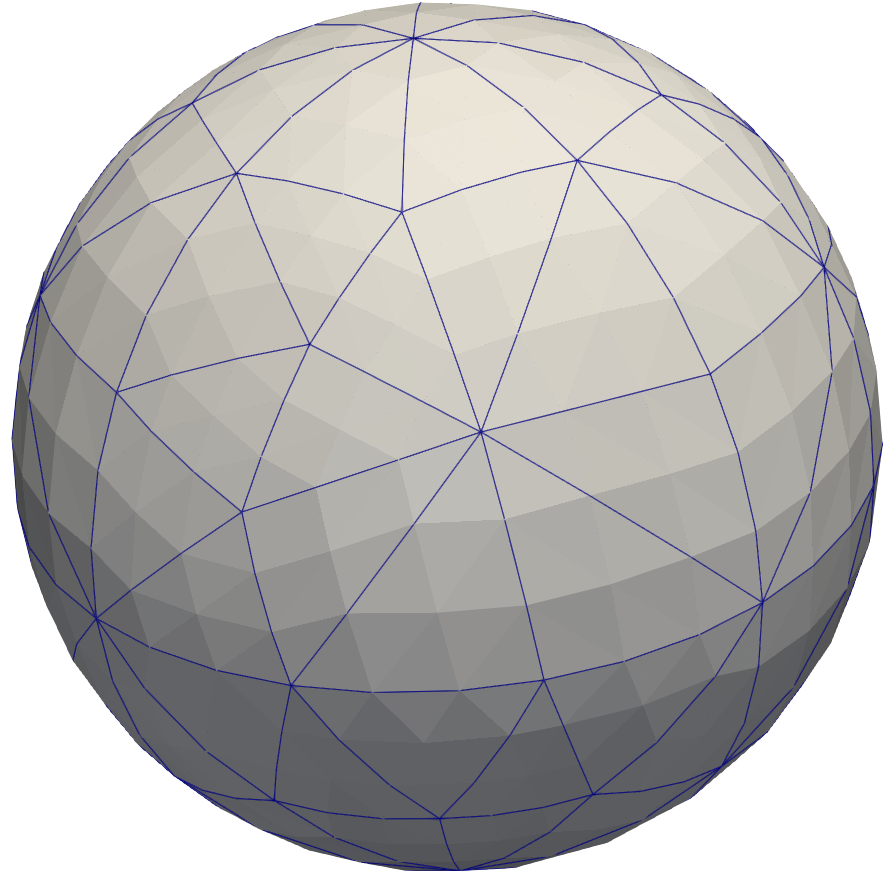}
 \hfill
  \caption{$\mathcal{M}_{h}^{3}$}
\end{subfigure}
\caption{Lagrange parametrization for a torus and a sphere using equidistant nodes on vertices and edges.}
\label{fig:L.param}
\end{figure}
\section{Accuracy of Integration}\label{main}

In this section, we will prove our main theorem about the accuracy of integration when replacing $\mathcal{M}$ by its $k-$th order polygonal approximation $\mathcal{M}_{h}^{k}$ and show that symmetric triangles of the mesh combined with even-degree polynomials improve global errors. This motivates the following definition.
\begin{figure}[h]
\centering
\begin{tikzpicture}
        % Include the image
        \node[inner sep=0pt] at (0,0) {\includegraphics[clip,width=0.7\columnwidth]{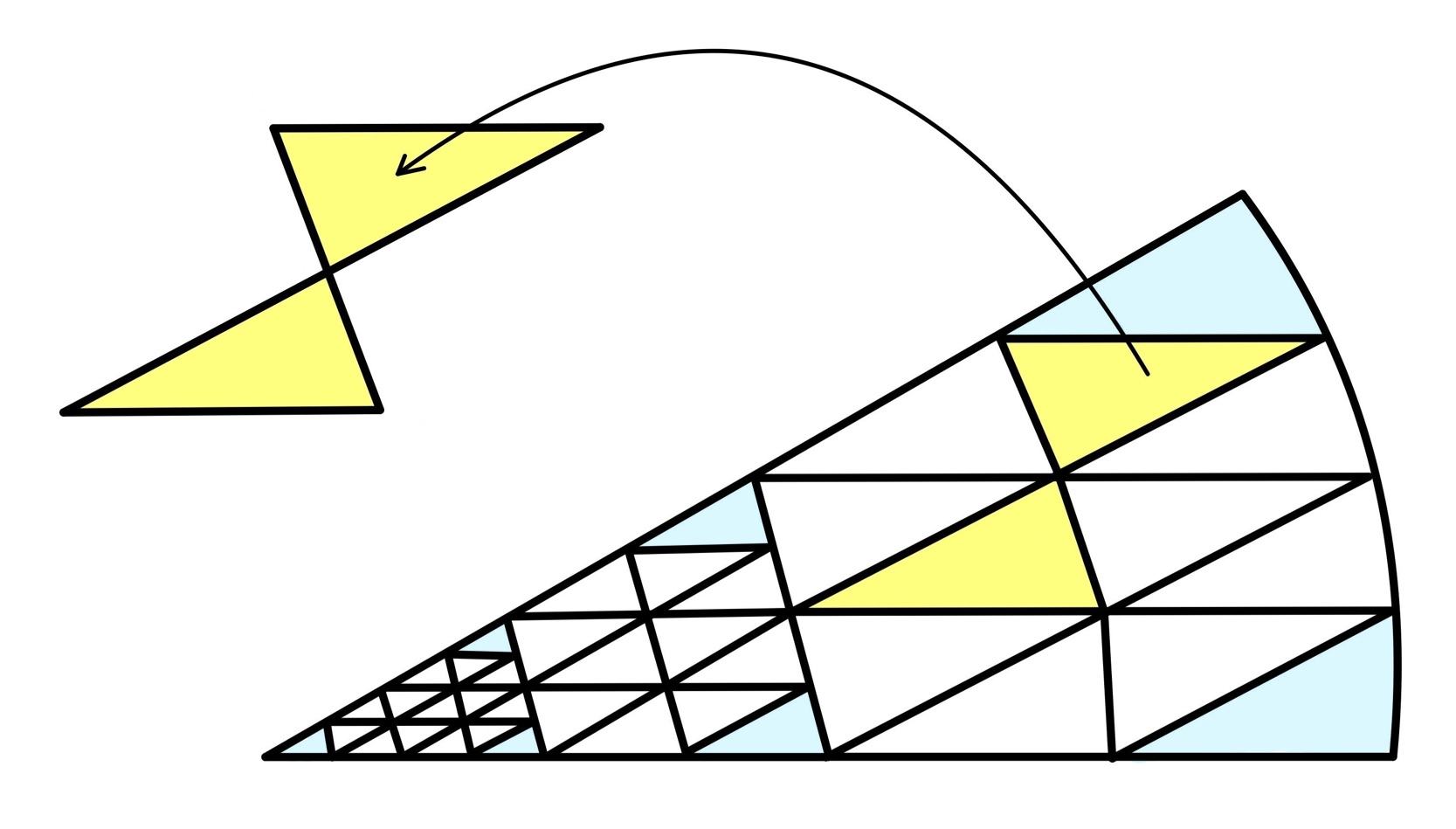}};
        
        % Add text and symbols

        \node[anchor=north west] at (-4.4,2.8) {$\mathbf{x}_{5}$};
        \node[anchor=north west] at (-1.2,2.8) {$\mathbf{x}_{4}$};

        \node[anchor=north west] at (-4.2,1.5) {$\mathbf{x}_{1}$};
           \node[anchor=north west] at (-6.1,0.4) {$\mathbf{x}_{2}$};
            \node[anchor=north west] at (-2.9,0.4) {$\mathbf{x}_{3}$};
             \node[anchor=north west] at (5.3,0.1) {$T\in\mathcal{T}^{1}_{h}$};
    \end{tikzpicture}
\caption{Illustration of bisection refinement and a duo of symmetric triangles obtained from the triangulation $\mathcal{T}^{1}_{h}$.}
\label{fig.symm}
\end{figure}

\begin{definition}[symmetric triangles]\label{sym}
Given a collection of triangles $\mathcal{T}^{1}_{h}$ \cite{RUPPERT1995548}, a pair of congruent triangles $T_{\mathbf{x}_{1},\mathbf{x}_{2},\mathbf{x}_{3}}$, $T_{\mathbf{x}_{1},\mathbf{x}_{4},\mathbf{x}_{5}}\in \mathcal{T}^{1}_{h}$ as in Fig.~\eqref{fig.symm} are called symmetric with respect to their common
vertex $\mathbf{x}_{1}$, if they satisfy the following property
\begin{equation}\label{eq.prop}
  \left(\mathbf{x}_{3}-\mathbf{x}_{1}\right)=-\left(\mathbf{x}_{5}-\mathbf{x}_{1}\right),\quad \left(\mathbf{x}_{2}-\mathbf{x}_{1}\right)=-\left(\mathbf{x}_{4}-\mathbf{x}_{1}\right).
\end{equation}
\end{definition}

Traditionally, the specific technique employed for triangulation refinement of a triangulation mesh~$\mathcal{T}^{1}_{h}$ is often regarded as unimportant, provided that the grid size $h\longrightarrow 0$ as $n\longrightarrow \infty$. However, a more intricate picture emerges when the influence of integration process is taken into account. The choice of the \emph{appropriate} type of triangulation can potentially result in a serendipitous cancellation of errors. Now, when examining a conforming triangulation $\mathcal{T}^{1}_{h}$, we refine each triangle $T\in\mathcal{T}^{1}_{h}$ into four smaller triangles using bisection refinement, which involves connecting the midpoints of its edges with straight lines. The new elements are all congruent, and
they are similar to $T$. Repeating the bisection refinement $m-$ times, the total number of triangles at that level will be approximately $\mathcal{O}(2^{2m})$. An advantage of this form of refinement is that each set of mesh points contains those mesh points at the preceding level. During this refinement procedure, we obtain different
triangular elements in size. For example, in Fig.\ref{fig.symm}, you can observe three distinct triangular elements of varying sizes.

Consider $\mathcal{C}_{m},\, m\in \mathbb{N}$ as the set of triangles with equal size within $T\in \mathcal{T}^{1}_{h}$ at the $m$-th refinement level, and let $n_{m}$ represent the number of triangles in this set, then we can write 
\begin{equation*}
\mathcal{C}_{m}=\bigcup_{i=1}^{n_m}\{T^{i}_{h(\frac{1}{2})^{m}}\},\, \text{where}\, h=\diam (T).
\end{equation*}
For example, from the illustration in Fig.~\ref{fig.symm}, we have   
    \begin{equation*}
\mathcal{C}_{0}=T,\;\mathcal{C}_{1}=\bigcup_{i=1}^{n_1}\{T^{i}_{(\frac{h}{2})}\},\;\mathcal{C}_{2} =\bigcup_{i=1}^{n_2}\{T^{i}_{h(\frac{1}{2})^{2}}\} 
,\;\mathcal{C}_{3}= \bigcup_{i=1}^{n_3}\{T^{i}_{h(\frac{1}{2})^{3}}\} ,\;\mathcal{C}_{4} =\bigcup_{i=1}^{n_4}\{T^{i}_{h(\frac{1}{2})^{4}}\}.\,
\end{equation*}
Observe that during each bisection refinement step, at least two triangles fail to meet the Definition \ref{sym}. As an illustration, these triangles can be discerned based on their blue shading in Fig.~\ref{fig.symm}. As a result, the density of symmetric pairs of triangles within $T\in \mathcal{T}^{1}_{h}$ after the $m$-th refinement level is $\mathcal{O}\left(\Tilde{n}_{m}\right)$ while the remaining triangles exhibit a density of $\mathcal{O}\left(\sqrt{\Tilde{n}_{m}}\right)$. Here, $\Tilde{n}_{m}$ denotes the total number of triangles within $T\in \mathcal{T}^{1}_{h}$ after the $m$-th refinement.

\begin{remark}[symmetric triangulation] Any (smoothed closed) surface triangulation mesh that has undergone the refinement process previously described will be referred to as a \emph{symmetric triangulation}. 
\end{remark}

In light of these facts, we state the main theorem of this article, showing that even degree approximation is superior to odd degree approximation for \emph{symmetric triangulations}.  
\begin{theorem}\label{main.thm}
Let $\mathcal{M}$ be a smooth closed  embedded hypersurface and $f\in \mathbf{C}^{k+2}\left(\mathcal{M},\mathbb{R}\right)$. Consider a piecewise linear triangulation $\mathcal{M}_{h}$ with mesh size $h$ of the smooth surface having vertices lie on $\mathcal{M}$ and let $\mathcal{M}^{k}_h$ be the $k-$th order approximation of the smooth surface constructed using local fittings of $\pi\in\mathbf{C}^{k+3}\left(\mathcal{M}_{h},\mathbb{R}^{3}\right)$. Consider a symmetric triangulation, consisting of $\mathcal{O}\left(n\right)=\mathcal{O}\left(h^{-2}\right)$ 
symmetric triangles, whereas the number of non-symmetric triangles is bounded by $\mathcal{O}\left(\sqrt{n}\right)=\mathcal{O}\left(h^{-1}\right)$. Then 
\begin{equation}
\left| \int_{\mathcal{M}} f \, dS - \int_{\mathcal{M}^{k}_{h}} Q_{f,k} \, dS \right|
\leq \begin{cases}
    Ch^{k+2}, & k\equiv 0\ (\textrm{mod}\ 2)\\
    Ch^{k+1}, & k\equiv 1\ (\textrm{mod}\ 2),
  \end{cases} 
\end{equation}
where $Q_{f,k}: \mathcal{M}^{k}_{h}\rightarrow\mathbb{R}$ is a $k-$th order polynomial approximating the integrand $f$.
\end{theorem}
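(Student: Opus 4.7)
My plan is to decompose the global error into local errors on each triangle $T \in \mathcal{T}^{1}_{h}$, establish the standard interpolation-theoretic bound $|E(T)| = O(h^{k+3})$, and then gain one extra order on symmetric pairs via a reflection-symmetry argument. Pulling back to the reference simplex $\sigma$, the local error on $T$ is
$$E(T) = \int_{\sigma} \bigl[f(\psi(\hat{p}))\,g(\hat{p}) - Q_{f,k}(Q_{\psi,k}(\hat{p}))\,\tilde{g}(\hat{p})\bigr]\,d\hat{p},$$
which splits by the triangle inequality into the interpolation error of $f$, the composition error from $\psi - Q_{\psi,k}$, and the induced metric error $g - \tilde{g}$. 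Classical Lagrange interpolation on a simplex of diameter $h$ gives $\|f - Q_{f,k}\|_{\infty} = O(h^{k+1})$ and $\|\psi - Q_{\psi,k}\|_{C^{1}(\sigma)} = O(h^{k+1})$ (using $f \in C^{k+2}$ and $\pi \in C^{k+3}$), and hence $\|g - \tilde{g}\|_{\infty} = O(h^{k+1})$. Combining with $\mathrm{vol}(T) = O(h^{2})$ yields $|E(T)| \leq C h^{k+3}$ on every triangle; summed over the $O(h^{-2})$ triangles this already produces the odd-$k$ estimate $O(h^{k+1})$.

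For even $k$ the analysis is refined to extract cancellation. Expanding the integrand of $E(T)$ around the reference preimage $\hat{p}_{1}$ of the distinguished shared vertex $x_{1}$, and exploiting that $\mathcal{I}_{k}$ reproduces polynomials of degree $\leq k$, the leading remainder takes the form
$$E(T) = \int_{\sigma} R_{k+1}(\hat{p};T)\,d\hat{p} + O(h^{k+4}),$$
where $R_{k+1}$ is $(k+1)$-homogeneous in the displacement $\xi(\hat{p}) - x_{1}$, with coefficients built from $D^{k+1} f(x_{1})$, $D^{k+1}\pi(x_{1})$ and first-fundamental-form data at $x_{1}$. For a symmetric pair $T_{1}, T_{2}$ sharing $x_{1}$, Definition~\ref{sym} gives $\xi_{2}(\hat{p}) - x_{1} = -(\xi_{1}(\hat{p}) - x_{1})$, and since $\pi$ is the orthogonal tangent-plane projection at $x_{1}$ to leading order, the same reflection identity carries over to $\psi_{i}(\hat{p}) - x_{1}$. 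Substituting into $R_{k+1}$ introduces a factor $(-1)^{k+1}$: for $k$ even this equals $-1$, the two contributions annihilate, and $|E(T_{1}) + E(T_{2})| \leq C h^{k+4}$.

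For even $k$ the $O(h^{-2})$ symmetric pairs then contribute $O(h^{-2}) \cdot O(h^{k+4}) = O(h^{k+2})$, and the $O(h^{-1})$ non-symmetric triangles contribute $O(h^{-1}) \cdot O(h^{k+3}) = O(h^{k+2})$, giving the claimed rate; for odd $k$ the coarser bound $|E(T)| \leq C h^{k+3}$ is already sharp and yields $O(h^{k+1})$. The main technical obstacle lies in the cancellation argument: the integrand of $E(T)$ depends \emph{nonlinearly} on $Q_{\psi,k}$ through $\tilde{g} = \sqrt{\det(\tilde{J}^{T}\tilde{J})}$, so the three competing error sources (from $f - Q_{f,k}$, from $\psi - Q_{\psi,k}$, and from $g - \tilde{g}$) must be separated carefully and each leading $(k+1)$-st-order remainder shown to carry an odd-degree dependence on $\xi(\hat{p}) - x_{1}$ in a form compatible with the point-reflection symmetry. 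The Taylor expansion of $\pi$ at $x_{1} \in \mathcal{M}$ --- linear to leading order --- is precisely what makes this reflection identity survive through the nonlinear composition with $f$ and $\sqrt{\det}$.
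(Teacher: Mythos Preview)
Your overall architecture (localize, expand, exploit point-reflection on symmetric pairs) matches the paper, but there is a genuine gap in the scaling and in which term actually dominates. The claimed baseline $|E(T)|\le Ch^{k+3}$ is not what your splitting gives: you are integrating over the unit reference simplex $\sigma$, and the area factor $O(h^{2})$ is already encoded in $g$, so multiplying once more by $\vol(T)$ double-counts. The correct crude bound is $|E(T)|=O(h^{k+2})$, driven by the metric term $Q_{f,k}\,(g-\tilde g)$, because $g-\tilde g = O(h^{k+2})$ (one factor of $h$ from $\partial_{s}\psi=O(h)$ on top of the $O(h^{k+1})$ derivative error of $\psi$). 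Summing this over $O(h^{-2})$ triangles only yields $O(h^{k})$, so your odd-$k$ argument collapses. In the paper the odd case is \emph{also} obtained by symmetric cancellation: the leading metric term $\mathcal{E}(h^{k+2})$ is homogeneous of odd degree $k+2$ in the edge vectors $(e_{1},e_{2})$ and flips sign on the reflected partner, so symmetric pairs contribute $O(h^{k+3})$ while the $O(h^{-1})$ non-symmetric triangles contribute $O(h^{-1})\cdot O(h^{k+2})=O(h^{k+1})$.

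The even case has a sharper problem. Your $(-1)^{k+1}$ argument targets the wrong leading term: the dominant contribution to $E(T)$ comes from $g-\tilde g$, whose leading piece is homogeneous of degree $k+2$ (not $k+1$) in the edge vectors, and for even $k$ this is \emph{even} under $(e_{1},e_{2})\mapsto(-e_{1},-e_{2})$ --- it does \emph{not} cancel on symmetric pairs. The paper removes it by a completely different mechanism, Lemma~\ref{lem:err} (Chien): for even $k$, the integral over $\sigma$ of the derivative of the degree-$(k+1)$ interpolation remainder vanishes identically, so $\int_{\sigma}\mathcal{E}(h^{k+2})\,ds\,dt=0$ on \emph{every} triangle. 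Only after this reduction does the symmetric-pair reflection apply to the next term $\mathcal{E}(h^{k+3})$ (odd degree $k+3$), producing the $O(h^{k+4})$ per-pair estimate that you need. Without invoking Lemma~\ref{lem:err}, the even-$k$ improvement cannot be obtained from the reflection symmetry alone.
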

As we approach the proof of Theorem \ref{main.thm}, we need three lemmata.
\begin{lemma}\label{err.tria}
Let $T$ be a triangle in $\mathcal{T}^{1}_{h}$ and  assume that $\psi\in\mathbf{C}^{k+1}\left(T\right)$ and~$k\geq 0$, then we have
\begin{equation}\label{eq.est}
   \norm{\psi-Q_{\psi,k}}_{L^{\infty}\left(T\right)} \leq c h^{k+1} \max^{l,p,q\geq 0}_{l+p+q=k+1}\max_{\left(x,y,z\right)\in T }\Big|\frac{\partial^{k+1}\psi\left(x,y,z\right)}{\partial x^{l}\partial y^{p}\partial z^{q}}\Big|
\end{equation}
with $h=diam(T)$. The constant $c$ depends on $k$, but it is independent of both $\psi$ and $T$.
\end{lemma}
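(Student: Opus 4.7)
The plan is to reduce the estimate on the triangle $T$ to a classical interpolation error bound on the fixed reference simplex $\sigma$ via the affine parametrization $\xi\colon \sigma \to T$ introduced in \cref{sec.1}, and then transfer the resulting derivative bounds back to $T$ through the scaling of $\xi$.

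First I would set $\hat{\psi} := \psi \circ \xi$ and observe that, by the definition \eqref{interp.} of the Lagrange interpolant, the interpolation operator commutes with the pullback by $\xi$; that is, $\hat{Q} := Q_{\psi,k} \circ \xi$ is precisely the Lagrange interpolant of $\hat{\psi}$ at the reference nodes $\hat{p}_1, \ldots, \hat{p}_{N(2,k)}$ on $\sigma$. Since $\xi$ is an affine bijection onto $T$, the $L^\infty$ norms coincide:
\begin{equation*}
\norm{\psi - Q_{\psi,k}}_{L^\infty(T)} = \norm{\hat{\psi} - \hat{Q}}_{L^\infty(\sigma)}.
\end{equation*}

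Second, on the fixed reference simplex $\sigma$ I would use Taylor's theorem: expanding $\hat{\psi}$ to order $k$ at any fixed point of $\sigma$ and exploiting the fact that the Lagrange interpolation operator $\mathcal{I}_k$ reproduces polynomials of degree $\le k$ exactly, one obtains
\begin{equation*}
\hat{\psi} - \hat{Q} = (\hat{\psi} - \mathcal{T}_k\hat{\psi}) - \mathcal{I}_k(\hat{\psi} - \mathcal{T}_k\hat{\psi}),
\end{equation*}
where $\mathcal{T}_k\hat{\psi}$ denotes the degree-$k$ Taylor polynomial of $\hat{\psi}$. Bounding the first summand by the usual Taylor remainder formula and the second by the Lebesgue constant $\Lambda_k$ of the Lagrange scheme on the fixed simplex $\sigma$ (which depends only on $k$) yields
\begin{equation*}
\norm{\hat{\psi} - \hat{Q}}_{L^\infty(\sigma)} \le (1+\Lambda_k)\,C_k \max_{|\alpha|=k+1} \norm{\partial^{\alpha} \hat{\psi}}_{L^\infty(\sigma)}.
\end{equation*}

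Third, I would transfer this bound to $\psi$ via the chain rule. Since $\xi$ is affine with linear part of operator norm bounded by a multiple of $h = \diam(T)$, iterating the chain rule gives, for every multi-index $\alpha$ with $|\alpha|\le k+1$,
\begin{equation*}
\norm{\partial^{\alpha} \hat{\psi}}_{L^\infty(\sigma)} \le c_k\, h^{|\alpha|} \max_{|\beta|=|\alpha|} \norm{\partial^{\beta} \psi}_{L^\infty(T)},
\end{equation*}
and substituting $|\alpha|=k+1$ into the previous estimate gives \eqref{eq.est} with $c = (1+\Lambda_k) C_k c_k$ depending only on $k$.

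The main subtlety is purely a bookkeeping one: one must verify that the chain-rule scaling produces exactly a factor $h^{k+1}$ without any hidden dependence on the shape of $T$, and that the Lebesgue constant $\Lambda_k$ on the fixed reference simplex $\sigma$ is independent of $\psi$ and $T$. The first point follows immediately from the fact that $\xi$ is affine with linear part bounded by $h$, while the second is a classical fact once the nodal distribution on $\sigma$ is fixed. Beyond these two observations, the remaining work is a standard interpolation-error computation.
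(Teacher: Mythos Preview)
Your argument is correct and follows essentially the same route as the paper: both proofs write the error as the Taylor remainder minus its Lagrange interpolant (using exact reproduction of polynomials of degree $\le k$) and then extract the factor $h^{k+1}$ via the chain rule through the affine map $\xi$. The only stylistic difference is that you bound the interpolated remainder cleanly via the Lebesgue constant $\Lambda_k$ on the fixed simplex $\sigma$, whereas the paper keeps the integral form of the Taylor remainder and bounds the resulting expression $E(\mu;s,t)$ directly.
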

\begin{proof}
In order to prove this lemma, we will use Taylor’s formula in several variables. Let's denote with $\alpha=\left(\alpha_{1},\alpha_{2}\right)$, where $|\alpha|=|\alpha_{1}|+|\alpha_{2}|$, $h:=\left(h_{1},h_{2}\right)=\left(s-0,t-0\right)$ and $\partial_{\alpha_{1}}=\partial_{s},\,\partial_{\alpha_{2}}=\partial_{t}.$\\ As a first step, let's consider the $k-$ order interpolation of the map $\psi$\begin{equation*}    Q_{\psi,k}\left(\hat{p}\right):=\sum_{i=1}^{N\left(2,n\right)} p_{i}\mathcal{L}_{i}^{k}\left(\hat{p}\right).
\end{equation*}
Using Taylor's formula for $\psi$ around the point $\left(0,0\right)$ we obtain:
\begin{equation}\label{eq.lm4}
  \psi\left(\hat{p}\right)=\sum_{|\alpha|=k}\Big(\frac{\partial^{\alpha}}{\alpha!}\psi\left(0\right)h^{\alpha}\Big)+ \frac{1}{k!}\int_{0}^{1}(1-\mu)^{k}\frac{d^{k+1}\psi\left(s\mu,t\mu\right)}{d^{k+1}\mu}d\mu.
\end{equation}
Let us denote $H\left(\hat{p}\right)=    \frac{1}{k!}\int_{0}^{1}(1-\mu)^{k}\frac{d^{k+1}\psi\left(s\mu,t\mu\right)}{d^{k+1}\mu}d\mu$, by interpolating each term in the Eq.~\eqref{eq.lm4} using a polynomial of order $k$ and using the fact that the interpolation of the first term is exact because it is a polynomial of degree~$\leq k$, namely

\begin{equation}
\mathcal{I}_{k}\sum_{|\alpha|=k}\Big(\frac{\partial^{\alpha}}{\alpha!}\psi\left(0\right)h^{\alpha}\Big)=\sum_{|\alpha|=k}\Big(\frac{\partial^{\alpha}}{\alpha!}\psi\left(0\right)h^{\alpha}\Big),
\end{equation}
we obtain
\begin{equation}\label{eq.main1}
Q_{\psi,k}\left(\hat{p}\right)=\sum_{|\alpha|=k}\Big(\frac{\partial^{\alpha}}{\alpha!}\psi\left(0\right)h^{\alpha}\Big)+\sum_{i=1}^{N\left(2,n\right)} H\left(\hat{p}_{i}\right)\mathcal{L}_{i}^{k}\left(\hat{p}\right).
\end{equation}
Subtracting Eq.\eqref{eq.main1} from Eq.\eqref{eq.lm4}, we have
\begin{equation}\label{eq.60}
\psi\left(\hat{p}\right)-Q_{\psi,k}\left(\hat{p}\right)=H\left(\hat{p}\right)-\sum_{i=1}^{N\left(2,n\right)} H\left(\hat{p}_{i}\right)\mathcal{L}_{i}^{k}\left(\hat{p}\right).
\end{equation}
The right hand side of Eq.\eqref{eq.60} can be written
\begin{equation}\label{62}
    H\left(\hat{p}\right)-\sum_{i=1}^{N\left(2,n\right)} H\left(\hat{p}_{i}\right)\mathcal{L}_{i}^{k}\left(\hat{p}\right)=\frac{1}{k!}\int_{0}^{1}(1-\mu)^{k}E\left(\mu;s,t\right)d\mu,
\end{equation}
where $E\left(\mu;s,t\right)$ has the following form
\begin{equation}
    E\left(\mu;s,t\right):=\frac{d^{k+1}\psi\left(s\mu,t\mu\right)}{d^{k+1}\mu}-\sum_{i=1}^{N\left(2,n\right)}\mathcal{L}_{i}^{k}\left(\hat{p}\right) \frac{d^{k+1}\psi\left(s_{i}\mu,t_{i}\mu\right)}{d^{k+1}\mu}.
\end{equation}
It is important to note that the left-hand side of Eq.~\eqref{eq.60} is affected by the behavior of the term $E\left(\mu;s,t\right)$, which itself is influenced by $\frac{d^{k+1}\psi\left(s\mu,t\mu\right)}{d^{k+1}\mu}$. For $k=0$, we have
\begin{equation}
   \max_{0\leq\mu\leq 1,\,\hat{p}\in\sigma}\bigg|\frac{d\psi\left(s\mu,t\mu\right)}{d\mu}\bigg|\leq ch\max \biggl\{
   \norm{\frac{\partial \psi}{\partial x}}_{L^\infty},\norm{\frac{\partial \psi}{\partial y}}_{L^\infty} ,\norm{\frac{\partial \psi}{\partial z}}_{L^\infty}\biggr\},
\end{equation}
where 
\begin{equation*}
    \norm{\frac{\partial \psi}{\partial x}}_{L^\infty}:=\max_{\left(x,y,z\right)\in T} \bigg|\frac{\partial \psi}{\partial x} \bigg|
\end{equation*}
and analogously for $\frac{\partial \psi}{\partial y},\,\frac{\partial \psi}{\partial z}$.
Following the same line of reasoning with the higher-order derivatives of $\psi$, we obtain
\begin{equation}
   \max_{0\leq\mu\leq 1,\,\hat{p}\in\sigma}\bigg|\frac{d^{k+1}\psi\left(s\mu,t\mu\right)}{d^{k+1}\mu}\bigg|\leq ch^{k+1}\max \biggl\{
   \norm{\frac{\partial \psi}{\partial x}}_{L^\infty},\norm{\frac{\partial \psi}{\partial y}}_{L^\infty} ,\norm{\frac{\partial \psi}{\partial z}}_{L^\infty}\biggr\}.
\end{equation}
Using \eqref{62}, yields
\begin{equation}\label{err.1}
        H\left(\hat{p}\right)-\sum_{i=1}^{N\left(2,n\right)} H\left(\hat{p}_{i}\right)\mathcal{L}_{i}^{k}\left(\hat{p}\right)=\mathcal{O}\left(h^{k+1}\right).
\end{equation}
Combining Eq.~\eqref{eq.60} with Eq.~\eqref{err.1}, yields Eq.~\eqref{eq.est}.
\end{proof}
Next, we recall the following Lemma.
\begin{lemma}[\cite{Chien1993}]\label{lem:err}
Let $k$ be an even integer. Let $\psi\left(\hat{p}\right)\in \mathcal{P}_{k+1}$ be a polynomial of degree $k+1$ on $\sigma$, and let $Q_{\psi,k}\left(\hat{p}\right)$ be its interpolant of degree $k$. Then, for each $\hat{p}\in \sigma$
\begin{equation}
    \int_{\sigma}\partial_{s}\left(\psi\left(\hat{p}\right)-Q_{\psi,k}\left(\hat{p}\right)\right)dsdt=0,\quad   \int_{\sigma}\partial_{t}\left(\psi\left(\hat{p}\right)-Q_{\psi,k}\left(\hat{p}\right)\right)dsdt=0.
\end{equation}

\end{lemma}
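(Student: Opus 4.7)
The plan is to prove the two identities by converting the 2D integrals into line integrals over $\partial\sigma$ via the divergence theorem and then exploiting a one-dimensional parity argument on each edge. Set $e:=\psi-Q_{\psi,k}$; since $\psi\in\mathcal{P}_{k+1}$ and $Q_{\psi,k}\in\mathcal{P}_k$, the error $e$ itself lies in $\mathcal{P}_{k+1}$ and vanishes at every interpolation node of the Lagrange lattice on $\sigma$. Applying the divergence theorem,
\begin{equation*}
\int_\sigma \partial_s e \, ds\, dt = \int_{\partial\sigma} e\,\nu_s\,d\ell, \qquad \int_\sigma \partial_t e \, ds\, dt = \int_{\partial\sigma} e\,\nu_t\,d\ell,
\end{equation*}
where $(\nu_s,\nu_t)$ is the outward unit normal. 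I would then decompose $\partial\sigma=E_1\cup E_2\cup E_3$ with $E_1=\{s=0\}$, $E_2=\{t=0\}$, $E_3=\{s+t=1\}$, and handle each edge separately.

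The next step uses the standard edge-restriction property of Lagrange interpolation on a triangle: restricted to any edge, $Q_{\psi,k}$ coincides with the univariate Lagrange interpolant of $\psi$ at the $k+1$ lattice nodes lying on that edge. Hence on each $E_j$ the restriction $e|_{E_j}$ is a one-variable polynomial of degree at most $k+1$ that vanishes at $k+1$ nodes, and therefore must be a scalar multiple of the 1D node polynomial $w_j(\tau)=\prod_{i=0}^{k}(\tau-\tau_i)$, parametrized by $\tau\in[0,1]$ along $E_j$. The key parity step is then that the lattice nodes on each edge are symmetric about the edge midpoint (true for equidistant, Gauss--Lobatto, and Chebyshev--Lobatto simplex lattices), so a short reindexing gives $w_j(1-\tau)=(-1)^{k+1}w_j(\tau)$. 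With $k$ even, $k+1$ is odd, so $w_j$ is antisymmetric about $\tau=1/2$ and integrates to zero over $[0,1]$. Thus each boundary contribution vanishes, yielding both identities simultaneously.

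The main obstacle I expect is verifying the edge-restriction property cleanly for the Lagrange nodes used in~\eqref{interp.}: one needs the nodal lattice on $\sigma$ to place exactly $k+1$ symmetrically distributed nodes on each edge so that the univariate restriction is well posed and so that the parity identity $w_j(1-\tau)=(-1)^{k+1}w_j(\tau)$ holds. For the standard principal (equidistant) lattice, and for Chebyshev--Lobatto simplex nodes, this is immediate; once that geometric fact is in hand, the divergence theorem together with the one-dimensional parity of $w_j$ essentially closes the argument.
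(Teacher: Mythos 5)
The paper does not prove this lemma at all: it is recalled verbatim from \cite{Chien1993} (``Next, we recall the following Lemma''), so there is no in-paper argument to compare against, and a self-contained proof is genuinely added value. Your boundary-based route is correct for the node set the paper actually uses (the equidistant/principal lattice on $\sigma$): the divergence theorem turns each identity into three edge integrals with constant normal components, and on each edge the error $e=\psi-Q_{\psi,k}$ is a univariate polynomial of degree at most $k+1$ vanishing at the $k+1$ edge nodes, hence a constant multiple of the monic node polynomial $w_j$, whose midpoint antisymmetry $w_j(1-\tau)=(-1)^{k+1}w_j(\tau)$ kills the integral when $k$ is even. Two remarks. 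First, the ``main obstacle'' you flag is weaker than you think: you do not need the full edge-restriction property of the triangular Lagrange interpolant. It suffices that (i) the restriction of a bivariate polynomial of total degree at most $k$ to a line has degree at most $k$, and (ii) each edge of the principal lattice carries exactly $k+1$ interpolation nodes, at which $e$ vanishes; these two trivial facts already force $e|_{E_j}=c_jw_j$. Second, your proof makes explicit a hypothesis that the lemma's statement hides: the conclusion depends on the node configuration (exactly $k+1$ nodes per edge, symmetric about the edge midpoint), not merely on the parity of $k$. That is satisfied by the equidistant nodes used in \eqref{interp.} and by Chebyshev--Lobatto edge nodes, so the lemma applies where it is invoked in Lemma \ref{local_error}, but it would not survive an arbitrary unisolvent node set. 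The approach in \cite{Chien1993} is likewise symmetry-based but works with the error functional on $\sigma$ directly; your version localizes everything to one-dimensional parity on the boundary, which is arguably more elementary and makes the role of the node symmetry transparent.
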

\begin{lemma}\label{local_error}
Let $\mathcal{M}$ be a smooth closed hypersurface and consider a piecewise linear triangulation $\mathcal{M}_{h}$ with mesh size $h$ of the smooth surface $\mathcal{M}$ and let $\mathcal{M}^{k}_h$ be the $k-$th order approximation of the smooth surface constructed using local fittings of $\pi\in\mathbf{C}^{k+3}\left(\mathcal{M}_{h},\mathbb{R}^{3}\right)$. 
Consider a symmetric triangulation, consisting of $\mathcal{O}\left(n\right)=\mathcal{O}\left(h^{-2}\right)$ 
symmetric triangles, whereas the number of non-symmetric triangles  is bounded by $\mathcal{O}\left(\sqrt{n}\right)=\mathcal{O}\left(h^{-1}\right)$.
Then 
 \begin{equation}\label{local.eq}
\left|\int_{\mathcal{M}}dS-\int_{\mathcal{M}^{k}_{h}}dS\right|
\leq \begin{cases}
    Ch^{k+1}, & k\equiv 1\ (\textrm{mod}\ 2)\\
    Ch^{k+2}, & k\equiv 0\ (\textrm{mod}\ 2).
  \end{cases} 
 \end{equation}
\end{lemma}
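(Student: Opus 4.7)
My plan is to write the global error as a sum of per-triangle contributions
\[
\int_\mathcal{M}dS - \int_{\mathcal{M}^k_h}dS \;=\; \sum_{T\in\mathcal{T}^1_h}\int_\sigma (g-\tilde g)\,ds\,dt,
\]
and then bound each local piece, treating the odd and even cases separately. On each reference simplex I first Taylor-expand $\tilde g$ around $J$ to obtain $g-\tilde g=\nabla_J g\cdot(J-\tilde J)+\mathcal{O}(\|J-\tilde J\|^2)$. Applying \cref{err.tria} to $\psi$ and the analogous Bramble--Hilbert-type bound to its first partials yields $\|J-\tilde J\|_{L^\infty(\sigma)}=\mathcal{O}(h^{k+1})$, so the quadratic remainder contributes $\mathcal{O}(h^{2k+2})$ per triangle and $\mathcal{O}(h^{2k})$ globally, already dominated by $h^{k+1}$ for $k\geq 1$.

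For the remaining linear term, the key geometric observation is that, since $\pi(y)=y-d(y)n(y)$ and $\xi$ is affine (so $Q_{\xi,k}=\xi$), one has the identity
\[
\psi-Q_{\psi,k}\;=\;Q_{d(\xi)n(\xi),k}-d(\xi)n(\xi),
\]
which exhibits the interpolation error as, to leading order, a scalar multiple of the unit normal $n$ to $\mathcal{M}$, with only a small tangential correction. Writing $\delta a:=\partial_s(\psi-Q_{\psi,k})=\alpha\,n+\varepsilon$ with scalar $\alpha=\mathcal{O}(h^{k+1})$ and tangential $\varepsilon=\mathcal{O}(h^{k+2})$, and using $\nabla_J g\cdot(J-\tilde J)=\hat{n}\cdot(\delta a\times b+a\times\delta b)$ together with $\hat{n}_{\mathcal{M}^k_h}\approx n_\mathcal{M}$ up to an $\mathcal{O}(h^k)$ angular error, the dominant $\alpha\,n$-component is annihilated and a direct calculation yields the pointwise bound $|g-\tilde g|=\mathcal{O}(h^{k+3})$. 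Summing over the $\mathcal{O}(h^{-2})$ triangles produces the odd-case bound $\mathcal{O}(h^{k+1})$.

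To sharpen this to $\mathcal{O}(h^{k+2})$ for even $k$, I pair each symmetric duo $\{T_1,T_2\}\subset\mathcal{T}^1_h$ at its shared vertex $\mathbf{x}_1$. Parameterizing both triangles so that $\hat{p}_1\mapsto\mathbf{x}_1$, the affine maps satisfy $\xi_2(s,t)-\mathbf{x}_1=-(\xi_1(s,t)-\mathbf{x}_1)$; Taylor-expanding $\psi$ about $\mathbf{x}_1$, the $j$-th homogeneous term on $T_2$ is therefore $(-1)^j$ times the corresponding term on $T_1$. Writing $\psi=P_k[\psi]+P_{k+1}[\psi]+R_{k+2}$ with $R_{k+2}=\mathcal{O}(h^{k+2})$, exactness of Lagrange interpolation on $\mathcal{P}_k(\sigma)$ leaves the leading per-triangle contribution driven by $P_{k+1}[\psi]-Q_{P_{k+1},k}$; \cref{lem:err} applied to this $(k+1)$-degree polynomial gives $\int_\sigma\partial_s(P_{k+1}-Q_{P_{k+1},k})\,ds\,dt=\int_\sigma\partial_t(P_{k+1}-Q_{P_{k+1},k})\,ds\,dt=0$ for even $k$. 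Combined with the $(-1)^{k+1}=-1$ sign flip between $T_1$ and $T_2$, the leading $\mathcal{O}(h^{k+3})$ contributions to $\int_\sigma(g-\tilde g)\,ds\,dt$ cancel within each pair, leaving a residual of order $h^{k+4}$ per triangle. Summed over the $\mathcal{O}(h^{-2})$ symmetric pairs this contributes $\mathcal{O}(h^{k+2})$, and the $\mathcal{O}(h^{-1})$ non-symmetric triangles each contribute $\mathcal{O}(h^{k+3})$ by the odd-case estimate, totaling $\mathcal{O}(h^{k+2})$ as claimed.

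The delicate step is promoting the naive pointwise bound $|g-\tilde g|=\mathcal{O}(h^{k+2})$ to $\mathcal{O}(h^{k+3})$ in the odd-case argument: this requires careful decomposition of $\delta a,\delta b$ and $a,b$ into tangent and normal components relative to both $\mathcal{M}$ and $\mathcal{M}^k_h$, and quantitative control of the angular discrepancy between their unit normals. The even-case cancellation is conceptually clean once the symmetric pairing is in place, but one must propagate the $(-1)^{k+1}$ parity from $\psi-Q_{\psi,k}$ through the cross products in $\nabla_J g\cdot(J-\tilde J)$ to ensure the sign flip persists at the level of $g-\tilde g$, not merely at the level of $\partial_s(\psi-Q_{\psi,k})$.
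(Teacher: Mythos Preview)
Your global decomposition $\int_\mathcal{M}dS-\int_{\mathcal{M}^k_h}dS=\sum_T\int_\sigma(g-\tilde g)\,ds\,dt$ matches the paper, but the mechanism you use to bound each piece diverges from the paper's, and the divergence contains a genuine gap.

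For the odd case you aim for the sharpened \emph{pointwise} bound $|g-\tilde g|=\mathcal{O}(h^{k+3})$ via a normal/tangent decomposition $\delta a=\alpha\,n+\varepsilon$, with the crucial claim that the tangential part satisfies $\varepsilon=\mathcal{O}(h^{k+2})$. This step is not established and in fact appears to fail for $k\ge 2$. Using your own identity $\psi-Q_{\psi,k}=-(I-\mathcal{I}_k)[d(\xi)n(\xi)]$ and expanding $\partial_s^{k+1}(dn)$ by Leibniz at the vertex $q_1\in\mathcal{M}$, the term with $j=2$, namely $(\partial_s^2 d)(\partial_s^{\,k-1}n)$, is of size $\mathcal{O}(h^{k+1})$ and is \emph{tangential} (derivatives of the unit normal $n$ lie in $T\mathcal{M}$, and $\partial_s^2 d=\xi_s^{\top}(\nabla n)\xi_s$ is the second fundamental form, generically nonzero). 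Hence the tangential component of $\delta a$ is $\mathcal{O}(h^{k+1})$, the same order as the normal one, and the pointwise bound collapses back to $|g-\tilde g|=\mathcal{O}(h^{k+2})$. Summing this over $\mathcal{O}(h^{-2})$ triangles yields only $\mathcal{O}(h^{k})$, not $\mathcal{O}(h^{k+1})$.

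The paper never attempts this geometric upgrade. It accepts the expansion $g-\tilde g=\mathcal{E}(h^{k+2})+\mathcal{E}(h^{k+3})+\mathcal{O}(h^{k+4})$ and gains the missing order purely from the symmetric-pair structure: the leading coefficient $\mathcal{E}(h^{k+2};\mathbf{x}_3-\mathbf{x}_1,\mathbf{x}_2-\mathbf{x}_1)$ is odd in the edge vectors, so it cancels against its partner via \eqref{eq.prop}. Thus the odd case in the paper already hinges on the symmetric triangulation---it is not a consequence of any normal-direction argument. Your even-case sketch also conflates two separate steps: in the paper, \cref{lem:err} alone kills $\int_\sigma\mathcal{E}(k+2)$ for even $k$ (before any pairing), bringing each triangle to $\mathcal{O}(h^{k+3})$; the symmetric cancellation is then applied to the \emph{next} term $\mathcal{E}(k+3)$, whose odd total degree in the edge vectors makes it flip sign, reducing the paired contribution to $\mathcal{O}(h^{k+4})$. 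Your sketch invokes both \cref{lem:err} and the $(-1)^{k+1}$ sign flip on the same leading term, which obscures this two-stage structure.
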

\begin{proof}
First, let us write every term in Eq.~\eqref{local.eq} over a reference simplex 
\begin{equation}\label{int.2}
        \int_{\mathcal{M}}dS =\sum_{i=1}^{n}\int_{\sigma}g_{i}dsdt,\quad \int_{\mathcal{M}^{k}_{h}}dS =\sum_{i=1}^{n}\int_{\sigma}\Tilde{g}_{i}dsdt.
\end{equation}
In the same manner, as in  Lemma \ref{err.tria}, we obtain
\begin{equation}\label{eq.main}
    \psi\left(\hat{p}\right)-Q_{\psi,k}\left(\hat{p}\right)=\mathcal{R}_{k+1}+\mathcal{R}_{k+2}+\mathcal{O}\left(h^{k+3}\right),
\end{equation}
where
\begin{equation*}
    \mathcal{R}_{k+1}:=\sum_{|\alpha|=k+1}\Big(\frac{\partial^{\alpha}}{\alpha!}\psi\left(0\right)h^{\alpha}-\sum_{i=1}^{N\left(2,n\right)}h^{\alpha}\frac{\partial^{\alpha}}{\alpha!}\psi\left(0\right)\mathcal{L}_{i}^{k}\left(\hat{p}\right)\Big)
\end{equation*}
\begin{equation*}
   \mathcal{R}_{k+2}:=\sum_{|\alpha|=k+2}\Big(\frac{\partial^{\alpha}}{\alpha!}\psi\left(0\right)h^{\alpha}-\sum_{i=1}^{N\left(2,n\right)}h^{\alpha}\frac{\partial^{\alpha}}{\alpha!}\psi\left(0\right)\mathcal{L}_{i}^{k}\left(\hat{p}\right)\Big).
\end{equation*}
Analogously, an expansion can be given for the errors in the partial derivatives of $\partial_{\alpha_{1}}\psi\left(\hat{p}\right),\, \partial_{\alpha_{2}}\psi\left(\hat{p}\right)$, amounting to apply the derivative on \eqref{eq.main}. Therefore, we have
\begin{equation}
    \partial_{\alpha_{1}}\psi\left(\hat{p}\right)-\partial_{\alpha_{1}}Q_{\psi,k}\left(\hat{p}\right)=\partial_{\alpha_{1}}\mathcal{R}_{k+1}+\partial_{\alpha_{1}}\mathcal{R}_{k+2}+\mathcal{O}\left(h^{k+2}\right)
\end{equation}
\begin{equation}
    \partial_{\alpha_{2}}\psi\left(\hat{p}\right)-\partial_{\alpha_{2}}Q_{\psi,k}\left(\hat{p}\right)=\partial_{\alpha_{2}}\mathcal{R}_{k+1}+\partial_{\alpha_{2}}\mathcal{R}_{k+2}+\mathcal{O}\left(h^{k+2}\right).
\end{equation}
Using Taylor's theorem and expanding about $\left(s=0,t=0\right)$, we obtain
\begin{align}\label{eq.20im}
g_{i}-\Tilde{g}_{i}&:=\sqrt{\det(J^{T}J)} -\sqrt{\det(\Tilde{J}^{T}\Tilde{J})}\\\nonumber
&=\mathcal{E}\left(h^{k+2};\left(\mathbf{x}_{k}-\mathbf{x}_{j}\right),\left(\mathbf{x}_{\ell}-\mathbf{x}_{j}\right)\right)+\mathcal{E}\left(h^{k+3};\left(\mathbf{x}_{k}-\mathbf{x}_{j}\right),\left(\mathbf{x}_{\ell}-\mathbf{x}_{j}\right)\right)+\mathcal{O}\left(h^{k+4}\right),
\end{align}
 where $\mathcal{E}\left(h^{k+2};\left(\mathbf{x}_{k}-\mathbf{x}_{j}\right),\left(\mathbf{x}_{\ell}-\mathbf{x}_{j}\right)\right)$ and $\mathcal{E}\left(h^{k+3};\left(\mathbf{x}_{k}-\mathbf{x}_{j}\right),\left(\mathbf{x}_{\ell}-\mathbf{x}_{j}\right)\right)$, describes the collection of terms with order $k+2, k+3$ in $h$, whose coefficients are combinations of the vertices of triangles with appropriate indices $j,\, \ell$, and $k$. For example for a triangle $T_{\mathbf{x}_{1},\mathbf{x}_{2},\mathbf{x}_{3}}$ the coefficients are combination of $\left(\mathbf{x}_{3}-\mathbf{x}_{1}\right)$ and $\left(\mathbf{x}_{2}-\mathbf{x}_{1}\right)$ both for $\mathcal{E}\left(k+2\right)$ and $\mathcal{E}\left(k+3\right)$. From above computation we see that $\lvert g_{i}-\Tilde{g}_{i}\lvert$ is at least of order $\mathcal{O}\left(h^{k+2}\right) $, confirming also the result obtained in  (\cite{Ray2012}, Theorem 7). However, this result can be further improved assuming that the triangulation mesh comprises symmetric triangles. Thus, integrating Eq.~\eqref{eq.20im} over a reference simplex, we have
  \begin{equation}\label{eq.2}
   \int_{\sigma} \big(g_{i}-\Tilde{g}_{i}\big)dsdt =\int_{\sigma}\mathcal{E}\left(k+2\right)dsdt+\int_{\sigma} \mathcal{E}\left(k+3\right)dsdt+\mathcal{O}\left(h^{k+4}\right).
 \end{equation}

Now if $k$ is odd,~yields
 \begin{equation*}
     \int_{\sigma} \mathcal{E}\left(k+2\right)dsdt\neq 0
 \end{equation*}
then, the left hand side of the Eq.~\eqref{eq.2} is at least of order $\mathcal{O}\left(h^{k+2}\right)$ for every $T\in\mathcal{T}^{1}_{h}$.
Writing Eq.~\eqref{eq.2} with respect to a pair of symmetric triangles $T_{\mathbf{x}_{1},\mathbf{x}_{2},\mathbf{x}_{3}}$ and~$T_{\mathbf{x}_{1},\mathbf{x}_{4},\mathbf{x}_{5}}$, we obtain
 \begin{equation}\label{eq.4}
   \left.\int_{\sigma} \big(g_{i}-\Tilde{g}_{i}\big)dsdt\right|_{T_{\mathbf{x}_{1},\mathbf{x}_{2},\mathbf{x}_{3}}} =\int_{\sigma} \mathcal{E}\left(k+2\right)dsdt+\int_{\sigma} \mathcal{E}\left(k+3\right)dsdt+\mathcal{O}\left(h^{k+4}\right) 
 \end{equation}
  \begin{equation}\label{eq.5}
   \left.\int_{\sigma} \big(g_{i}-\Tilde{g}_{i}\big)dsdt\right|_{T_{\mathbf{x}_{1},\mathbf{x}_{4},\mathbf{x}_{5}}} =\int_{\sigma} \mathcal{E}\left(k+2\right)dsdt+\int_{\sigma} \mathcal{E}\left(k+3\right)dsdt+\mathcal{O}\left(h^{k+4}\right).
 \end{equation}
 From Eq.~\eqref{eq.4} and Eq.~\eqref{eq.5}, we have the following 
 \begin{equation}\label{eq.3}
   \int_{\sigma} \big(g_{i}-\Tilde{g}_{i}\big)dsdt =\int_{\sigma} \mathcal{E}\left(k+3\right)dsdt+\mathcal{O}\left(h^{k+4}\right).
 \end{equation}
 This is due to the fact that the first right hand side integrand of the Eq.~\eqref{eq.4} and Eq.~\eqref{eq.5}, are the collection of terms which are of order $k+2$ in $h$, where the coefficients are the combination of the vertices of triangles, thus each integrand is an odd function. In general for a triangle $T_{\mathbf{x}_{1},\mathbf{x}_{2},\mathbf{x}_{3}}$ we may write:
 \begin{equation}\label{eq.7}
     \mathcal{E}\left(h^{k+2};-\left(\mathbf{x}_{3}-\mathbf{x}_{1}\right),-\left(\mathbf{x}_{2}-\mathbf{x}_{1}\right)\right)=- \mathcal{E}\left(h^{k+2};\left(\mathbf{x}_{3}-\mathbf{x}_{1}\right),\left(\mathbf{x}_{2}-\mathbf{x}_{1}\right)\right).
 \end{equation}
In the same manner for $T_{\mathbf{x}_{1},\mathbf{x}_{4},\mathbf{x}_{5}}$
 \begin{equation}\label{eq.8}
     \mathcal{E}\left(h^{k+2};-\left(\mathbf{x}_{4}-\mathbf{x}_{1}\right),-\left(\mathbf{x}_{5}-\mathbf{x}_{1}\right)\right)=- \mathcal{E}\left(h^{k+2};\left(\mathbf{x}_{4}-\mathbf{x}_{1}\right),\left(\mathbf{x}_{5}-\mathbf{x}_{1}\right)\right).
 \end{equation}
 Using the property \eqref{eq.prop},  Eq.~\eqref{eq.7} and Eq.~\eqref{eq.8}, we obtain
  \begin{equation*}
      \mathcal{E}\left(h^{k+2};\left(\mathbf{x}_{3}-\mathbf{x}_{1}\right),\left(\mathbf{x}_{2}-\mathbf{x}_{1}\right)\right)+\mathcal{E}\left(h^{k+2};\left(\mathbf{x}_{4}-\mathbf{x}_{1}\right),\left(\mathbf{x}_{5}-\mathbf{x}_{1}\right)\right)=0.
 \end{equation*}
The first part of inequality\eqref{local.eq} is obtained by leveraging the fact that the number of symmetric triangles in the mesh $\mathcal{M}_{h}$ is of order $\mathcal{O}\left(n\right)=\mathcal{O}\left(h^{-2}\right)$, while the number of non-symmetric triangles is bounded by 
$\mathcal{O}\left(\sqrt{n}\right)=\mathcal{O}\left(h^{-1}\right)$, in combination with Eq.~\eqref{eq.2} and Eq.~\eqref{eq.3}.

At this point, we notice that if $k$ is even, then
 \begin{equation*}
     \int_{\sigma} \mathcal{E}\left(k+2\right)dsdt=0.
 \end{equation*}
This is due to the Lemma \ref{lem:err} because $\mathcal{E}\left(k+2\right)$ represents errors when integrating a polynomial of order $k+1$. Thus, we have
\begin{equation}\label{eq.new}
   \int_{\sigma} \big(g_{i}-\Tilde{g}_{i}\big)dsdt =\int_{\sigma} \mathcal{E}\left(k+3\right)dsdt+\mathcal{O}\left(h^{k+4}\right).
 \end{equation}
Again, writing Eq.~\eqref{eq.new} with respect to a pair of symmetric triangles, we obtain $T_{\mathbf{x}_{1},\mathbf{x}_{2},\mathbf{x}_{3}}$ and~$T_{\mathbf{x}_{1},\mathbf{x}_{4},\mathbf{x}_{5}}$
 \begin{equation}
   \left.\int_{\sigma} \big(g_{i}-\Tilde{g}_{i}\big)dsdt\right|_{T_{\mathbf{x}_{1},\mathbf{x}_{2},\mathbf{x}_{3}}} =\int_{\sigma} \mathcal{E}\left(k+3\right)dsdt+\mathcal{O}\left(h^{k+4}\right) 
 \end{equation}
  \begin{equation}
   \left.\int_{\sigma} \big(g_{i}-\Tilde{g}_{i}\big)dsdt\right|_{T_{\mathbf{x}_{1},\mathbf{x}_{4},\mathbf{x}_{5}}} =\int_{\sigma} \mathcal{E}\left(k+3\right)dsdt+\mathcal{O}\left(h^{k+4}\right).
 \end{equation}

 Now, if $k$ is even and triangles are pairwise symmetric then the error contributed is 
  \begin{equation}\label{eq.6}
   \int_{\sigma} \big(g_{i}-\Tilde{g}_{i}\big)dsdt =\mathcal{O}\left(h^{k+4}\right).
 \end{equation}
 This is attributed to the fact that for $k$ even, the following holds true.
   \begin{equation*}
    \mathcal{E}\left(h^{k+3};\left(\mathbf{x}_{3}-\mathbf{x}_{1}\right),\left(\mathbf{x}_{2}-\mathbf{x}_{1}\right)\right)+\mathcal{E}\left(h^{k+3};\left(\mathbf{x}_{4}-\mathbf{x}_{1}\right),\left(\mathbf{x}_{5}-\mathbf{x}_{1}\right)\right)=0.
 \end{equation*}
As in the odd case, combining Eq.~\eqref{eq.new} and  Eq.~\eqref{eq.6}, it yields the second part of inequality\eqref{local.eq}.
 \end{proof}

 We can now prove Theorem \ref{main.thm}
 \begin{proof}[Poof of theorem \ref{main.thm}]
As in Lemma \ref{local_error}, expanding the function $f$ using Taylor's formula around the point $\left(0,0\right)$,~yields
\begin{equation}\label{eq_mn}
    f\left(\psi\left(\hat{p}\right)\right)-Q_{f,k}\left(\psi\left(\hat{p}\right)\right)=\mathcal{R}_{k+1}+\mathcal{O}\left(h^{k+2}\right),
\end{equation}
where $\mathcal{R}_{k+1}$ has the form
\begin{equation*}
    \mathcal{R}_{k+1}:=\sum_{|\alpha|=k+1}\Big(\frac{\partial^{\alpha}}{\alpha!}f\left(0\right)h^{\alpha}-\sum_{i=1}^{N\left(2,n\right)}h^{\alpha}\frac{\partial^{\alpha}}{\alpha!}f\left(0\right)\mathcal{L}_{i}^{k}\left(\hat{p}\right)\Big).
\end{equation*}
Integrate both side of the Eq.~\eqref{eq_mn}, we have
\begin{equation}\label{eq_mn1}
    \int_{\sigma}\left(f\left(\psi\left(\hat{p}\right)\right)-Q_{f,k}\left(\psi\left(\hat{p}\right)\right)\right)g_{i}dsdt=\int_{\sigma}\mathcal{R}_{k+1}g_{i}dsdt+\mathcal{O}\left(h^{k+4}\right).
\end{equation}
Using $\lvert g_{i}\lvert=\mathcal{O}\left(h^{2}\right)$, the integral term on the right hand side is at least of order $\mathcal{O}\left(h^{k+3}\right)$.
If $k$ is even due to the presence of symmetric triangles the first term on the right-hand side integral is $0$, so we can compute the left-hand integral with an accuracy of order $\mathcal{O}\left(h^{k+4}\right)$, while for $k$ odd, we have the following
\begin{equation}\label{eq_mn2}
\int_{\sigma}  \left( f\left(\psi\left(\hat{p}\right)\right) - Q_{f,k}\left(\psi\left(\hat{p}\right)\right) \right) g_{i}  \, ds \, dt = \mathcal{O}\left(h^{k+3}\right).
\end{equation}

Assume that $\mathcal{M}_{h}^{k}$ is composed of triangles $\Tilde{T}^{k}$, i.e. $\mathcal{M}_{h}^{k}=\bigcup_{i=1}^{n} \Tilde{T}^{k}_{i} $ and the smooth closed surface, parameterized using 
\eqref{sis1} $\mathcal{M}=\bigcup_{i=1}^{n}V_{i}$. Then 
\begin{equation}\label{int.1}
        \int_{\mathcal{M}}fdS =\sum_{i=1}^{n}\int_{V_i}fdS,\quad \int_{\mathcal{M}^{k}_{h}}Q_{f,k}dS =\sum_{i=1}^{n}\int_{\Tilde{T}_{i}^{k}}Q_{f,k}dS.
\end{equation}
Let us rewrite Eq.~\eqref{int.1} over a reference simplex where the quadrature rules are defined
\begin{equation}\label{int.2}
        \int_{\mathcal{M}}fdS =\sum_{i=1}^{n}\int_{\sigma}f\left(\psi\left(\hat{p}\right)\right)g_{i}dsdt,\quad \int_{\mathcal{M}^{k}_{h}}Q_{f,k}dS =\sum_{i=1}^{n}\int_{\sigma}Q_{f,k}\left(\psi\left(\hat{p}\right)\right) \Tilde{g}_{i}dsdt.
\end{equation}
By making use of Lemma \ref{local_error}, we have
\begin{align*}
\left|\int_{\mathcal{M}}fdS-\int_{\mathcal{M}^{k}_{h}}Q_{f,k}dS\right|&\leq \left|\sum_{i=1}^{n}\int_{\sigma}\Big(f\left(\psi\left(\hat{p}\right)\right)g_{i}-Q_{f,k}\left(\psi\left(\hat{p}\right)\right) \Tilde{g}_{i}\Big)dsdt\right|\\
&\leq \sum_{i=1}^{n}\left|\int_{\sigma}\left(f\left(\psi\left(\hat{p}\right)\right)-Q_{f,k}\left(\psi\left(\hat{p}\right)\right)\right)g_{i}
 +Q_{f,k}\left(\psi\left(\hat{p}\right)\right) \left(g_{i}-\Tilde{g}_{i}\right)dsdt\right|\\
&\leq \sum_{i=1}^{n}\int_{\sigma}\lvert\left(f\left(\psi\left(\hat{p}\right)\right)-Q_{f,k}\left(\psi\left(\hat{p}\right)\right)\right)g_{i}\lvert dsdt\\
&+\int_{\sigma}\lvert Q_{f,k}\left(\psi\left(\hat{p}\right)\right) \left(g_{i}-\Tilde{g}_{i}\right)\lvert dsdt\\
&\leq \left\{\begin{array}{cc}
       Ch^{k+2}, & k\equiv 0\ (\textrm{mod}\ 2)\\
       Ch^{k+1}, & k\equiv 1\ (\textrm{mod}\ 2)
\end{array} \right.
\end{align*}

\end{proof}
For the last inequality, we have used Eq.~\eqref{eq_mn1} and Eq.~\eqref{eq_mn2}.
\section{Results and Discussion}
In order to validate our findings, we developed tests employing the Gauss-Bonnet theorem \cite{pressley2001},\\\cite{spivak1999} for a series of classic smooth closed surfaces
given by the following equations:
\begin{enumerate}
  \item[1)] Ellipsoid \quad $\frac{x^2}{a^2} + \frac{y^2}{b^2} + \frac{z^2}{c^2} = 1$,\quad  $a,b,c \in \R\setminus\{0\}$.
    \item[2)] Torus \quad $(x^2 + y^2 + z^2 + R^2 - r^2)^2 - 4R^2(x^2 + y^2) =0$, \quad $0 <r <R \in \R.$
  \item[3)] Sphere \quad $x^2+y^2+z^2= R^2$,  \quad $R\in \R\setminus\{0\}$.
\end{enumerate} 
The analytic expressions for the Gaussian Curvature are
\begin{enumerate}
  \item[1)] Ellipsoid \quad $K_{\mathrm{Gauss}}= \frac{1}{(abc)^2 \left( \frac{x^2}{a^4} + \frac{y^2}{b^4} + \frac{z^2}{c^4} \right)^2}$,\quad  $a,b,c \in \R\setminus\{0\}$.
 \item[2)] Torus \quad $K_{\mathrm{Gauss}} = \frac{\cos v}{r(R + r \cos v)}$, where we used toric coordinates:
 $$(x,y,z) = \big((R + r \cos \theta) \cos \varphi, (R + r \cos \theta) \sin \varphi, r \sin \theta\big),\;\varphi,\theta \in [0,2\pi).$$
  \item[3)] Sphere \quad $K_{\mathrm{Gauss}} = \frac{1}{R^2}$,  \quad $R\in \R\setminus\{0\}$.
\end{enumerate}
In all experiments we utilize the algorithm of Persson and Strang~\cite{Persson} to
generate Delaunay triangulations\footnote{\cite{geuzaine2009gmsh} presents another viable option for mesh generation.} serving as the initial $\mathcal{M}^{1}_{h}$ surface approximation and a Gaussian quadrature rule of $deg=12$ on each triangle, employing $32$ quadrature points \cite{dunavant1985high}.

% To calculate the experimental order of convergence we use the following formula
% \begin{equation}
%     EOC\left(h_{1},h_{2}\right)=\log\left(\frac{\norm{\int_{\mathcal{M}}fdS-\int_{\mathcal{M}^{k}_{h_{1}}}Q_{f,k}dS}_{L^{\infty}}}{\norm{\int_{\mathcal{M}}fdS-\int_{\mathcal{M}^{k}_{h_{2}}}Q_{f,k}dS}_{L^{\infty}}}\right)\left(\log\frac{h_{1}}{h_{2}}\right)^{-1}.
% \end{equation}
\begin{figure}[ht]
\begin{subfigure}{.47\textwidth}
  \centering
  \includegraphics[clip,width=1\columnwidth]{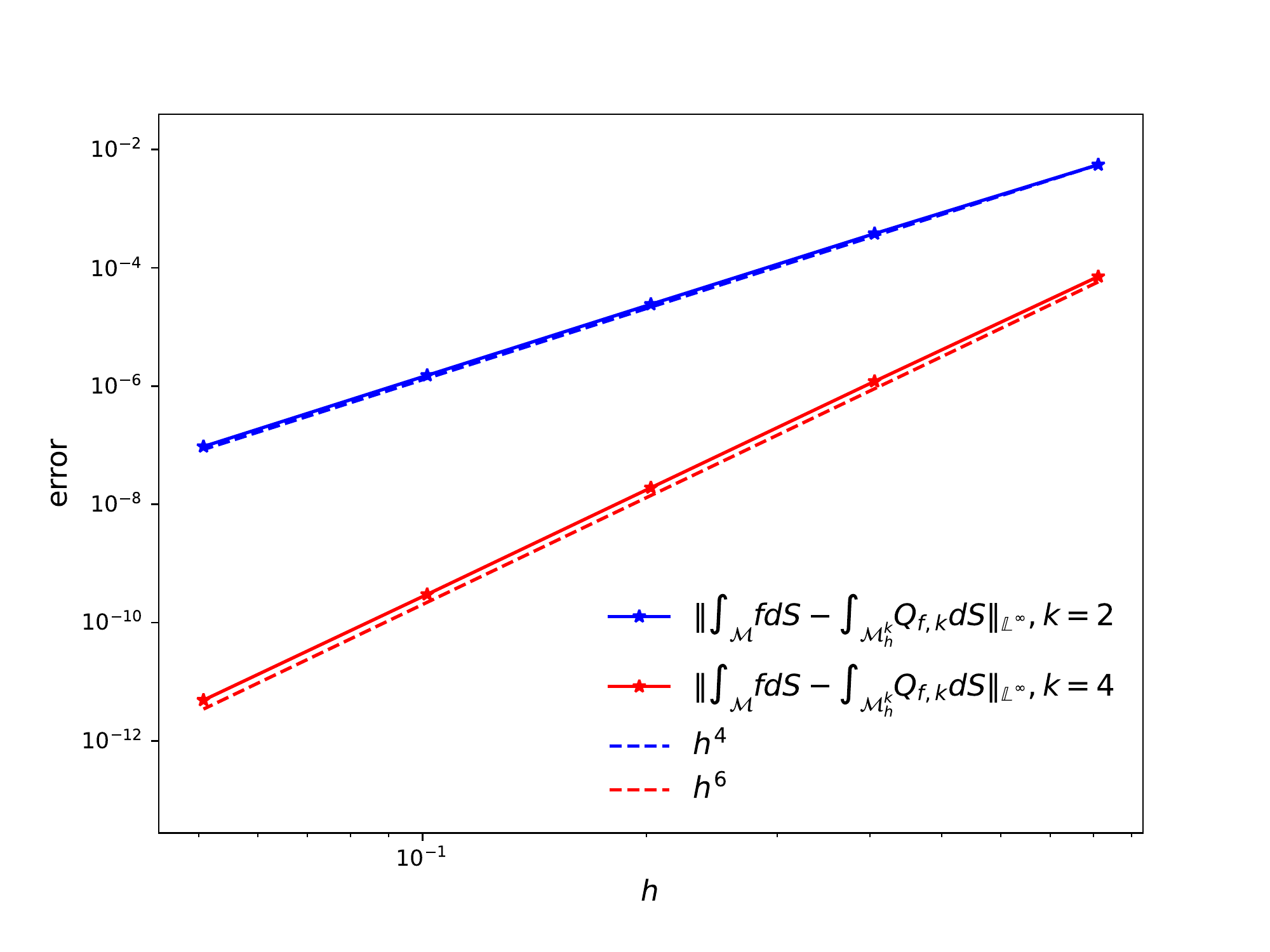}
  \caption{Even order}
\end{subfigure}%
\begin{subfigure}{.47\textwidth}
  \centering
 \includegraphics[clip,width=1\columnwidth]{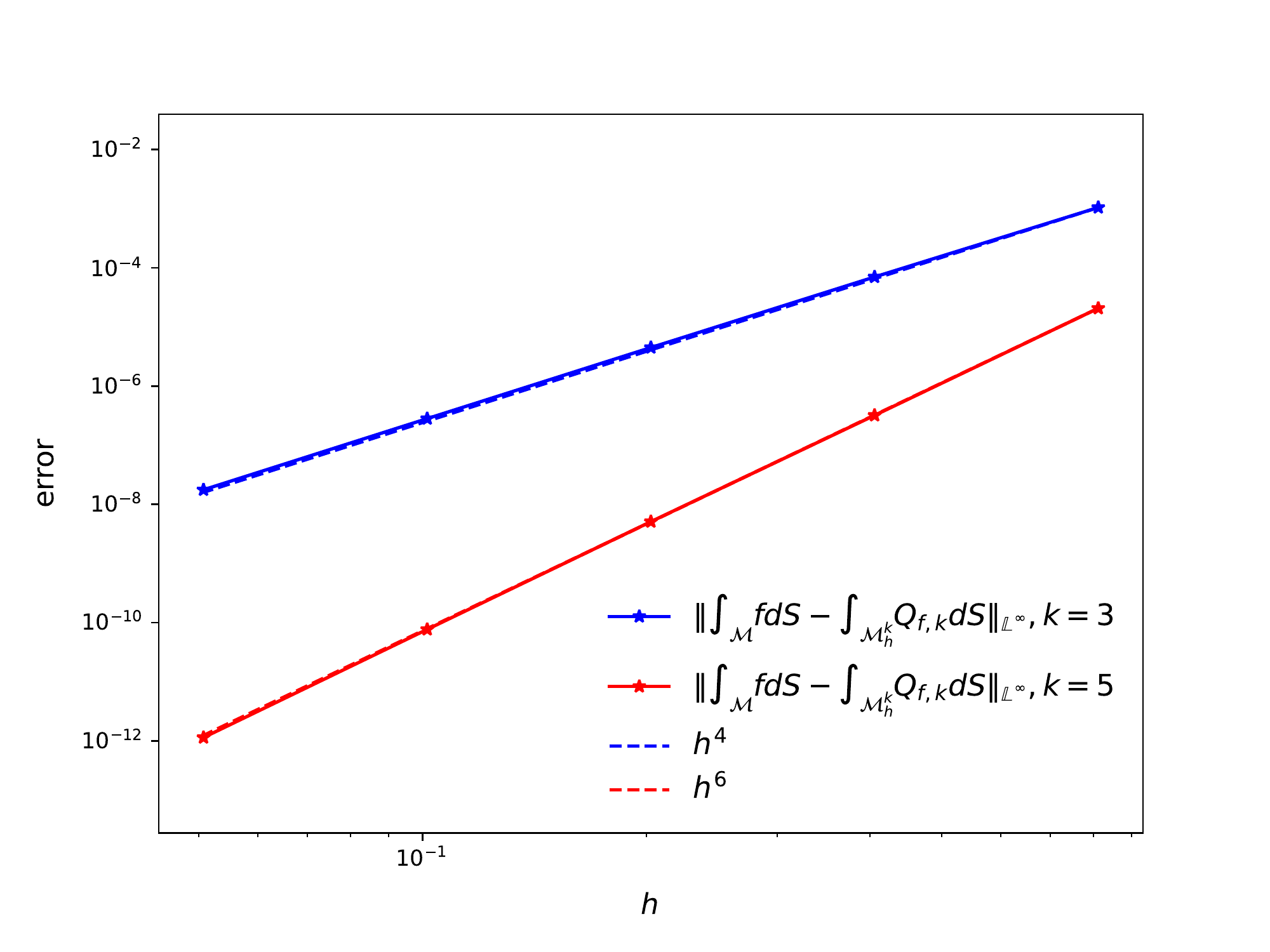}
  \caption{Odd order}
\end{subfigure}
\caption{Relative errors by integrating the Gaussian curvature over the torus with radii $R=2,\,r=1$ with the ideal convergence lines $h^{n}$.}
\label{fig:R}
\end{figure}

\begin{figure}[ht]
\begin{subfigure}{.47\textwidth}
  \centering
  \includegraphics[clip,width=1\columnwidth]{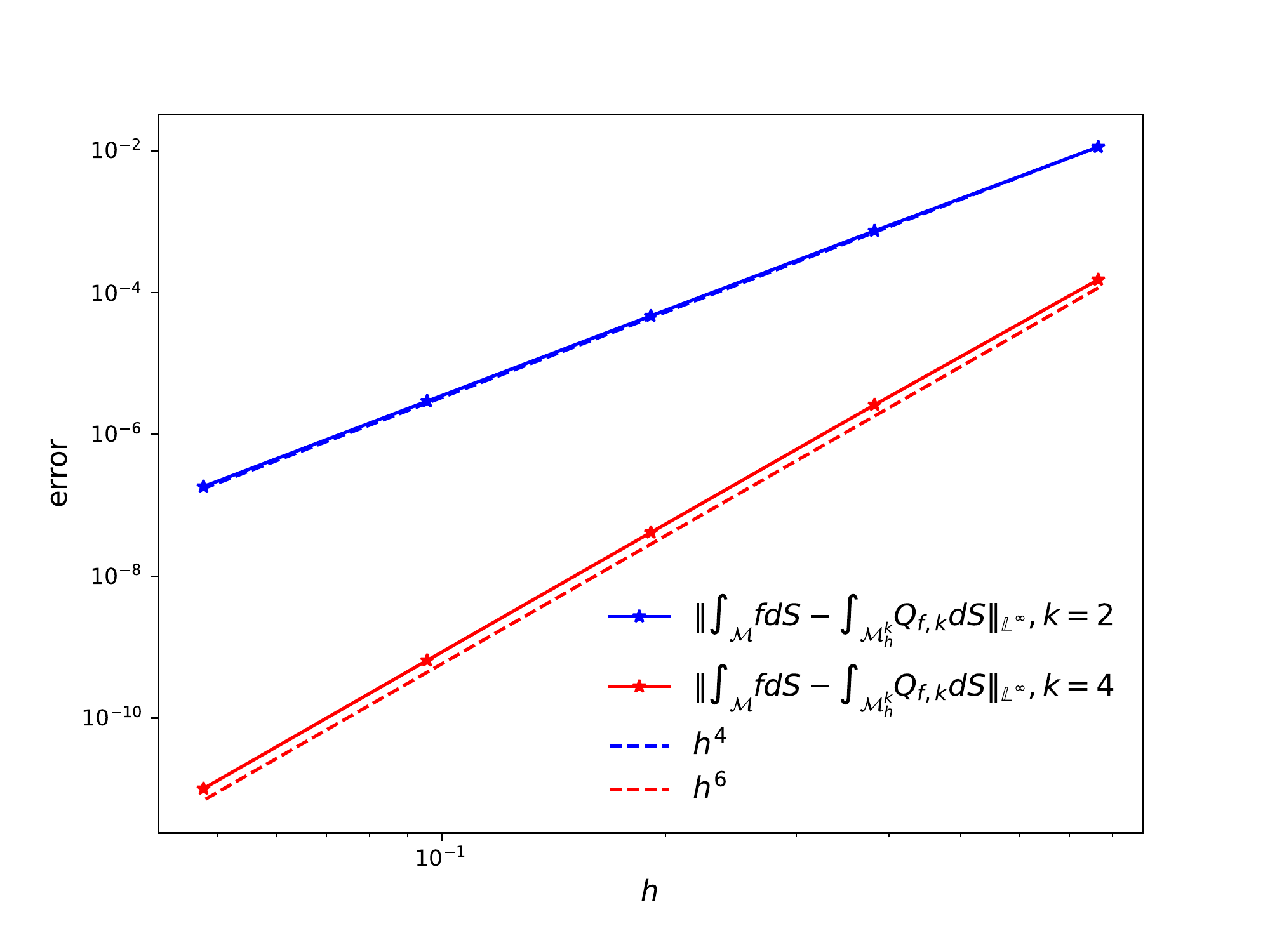}
  \caption{Even order}
\end{subfigure}%
\begin{subfigure}{.47\textwidth}
  \centering
 \includegraphics[clip,width=1\columnwidth]{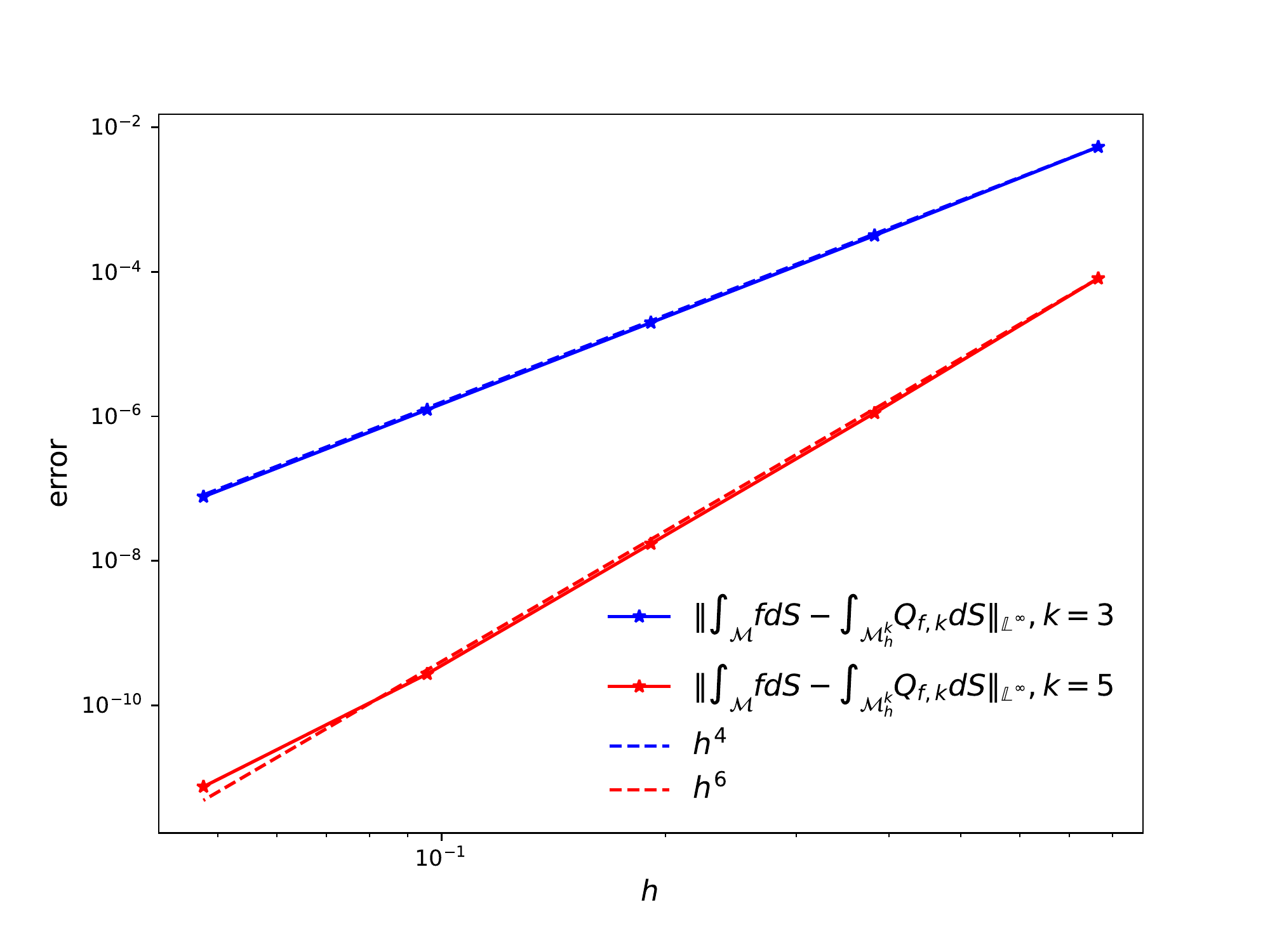}
  \caption{Odd order}
\end{subfigure}
\caption{Relative errors by integrating the Gaussian curvature over the unit sphere with the ideal convergence lines $h^{n}$.}
\label{fig:R1}
\end{figure}
In Fig.~\ref{fig:R} and Fig~\ref{fig:R1} we show the relative errors under mesh refinement. In order to calculate the relative error, we integrate the Gaussian curvature on the manifold and compare it with the predictions of the Gauss-Bonnet Theorem.  The convergence rates shown on the plots coincide, confirming our theoretical results.

However, as can be seen from Fig.~\ref{Fig}, the approach presented in \cite{CurvedGrid} has some limitations, it runs into Runge's phenomenon. This is due to the fact that the Lebesgue constant $\Lambda$ for the set of equidistant points grows exponentially \cite{Mills1992TheLC}.
\begin{figure}[ht]
% \begin{subfigure}{.47\textwidth}
%   \centering
%   \includegraphics[clip,width=1\columnwidth]{torus.png}
%   \caption{Visualization of  Gaussian Curvature for the torus.}
% \end{subfigure}%
% \hfill
% \begin{subfigure}{.47\textwidth}
%   \centering
%  \includegraphics[clip,width=1\columnwidth]{ellipsoid.png}
%    \caption{Visualization of  Gaussian Curvature for the ellipsoid.}
% \end{subfigure}
\begin{subfigure}{.47\textwidth}
  \centering
  \includegraphics[clip,width=1\columnwidth]{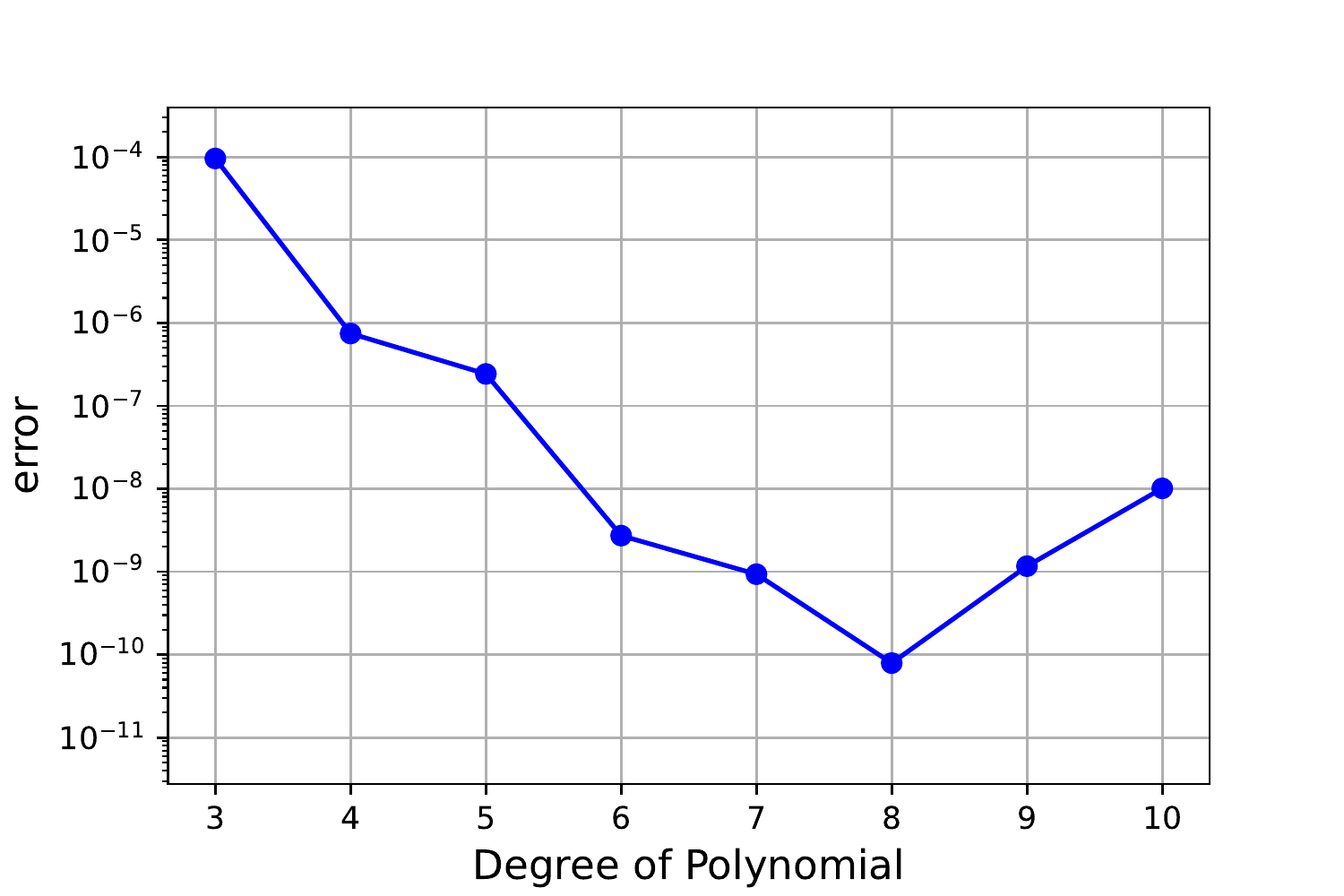}
  \caption{torus with radii $R=2,r=1$}
\end{subfigure}%
\hfill
\begin{subfigure}{.47\textwidth}
  \centering
 \includegraphics[clip,width=1\columnwidth]{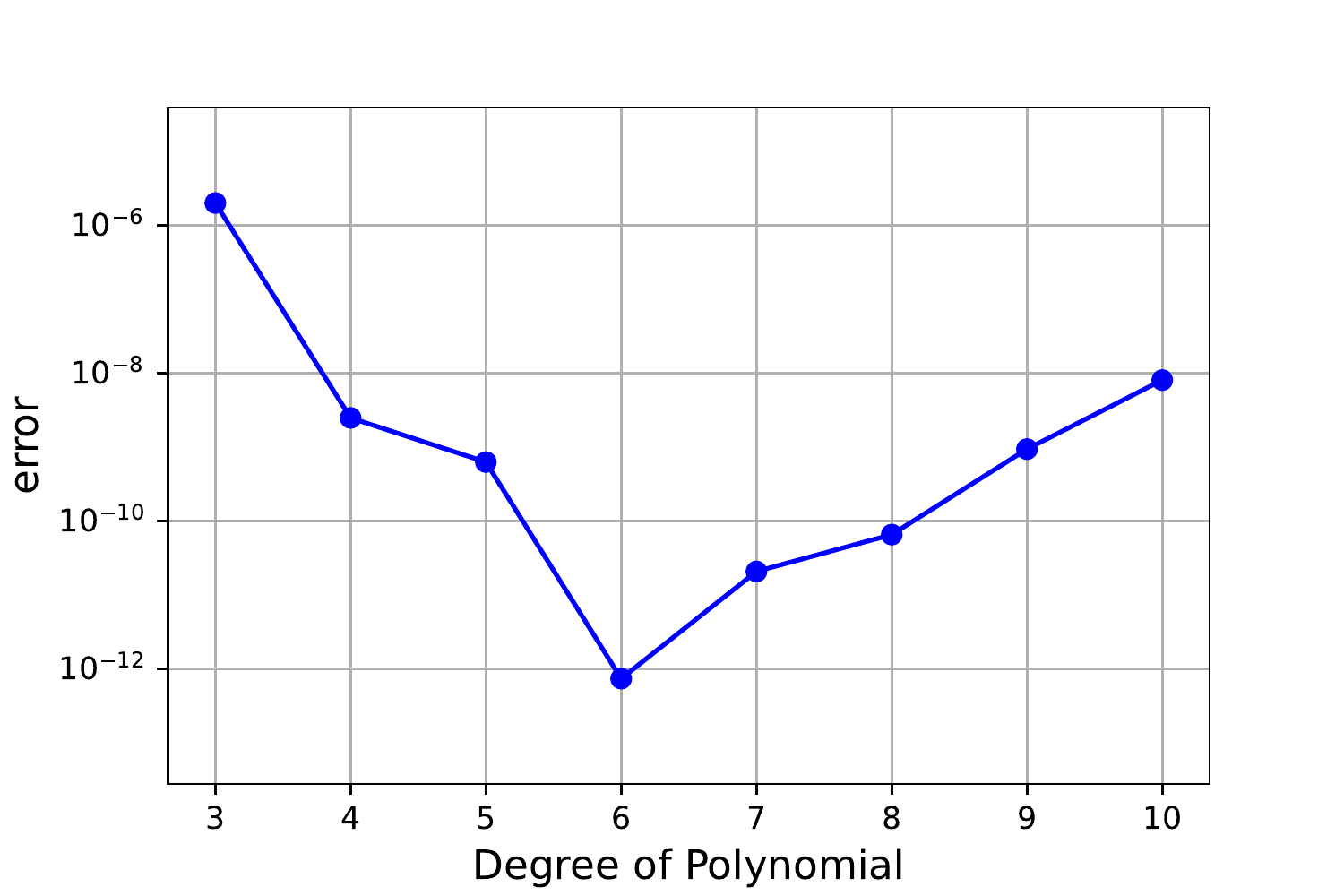}
 \caption{ellipsoid with $a=b=1,c=0.6$.}
\end{subfigure}
\caption{Relative errors by integrating the Gaussian curvature over the torus and the ellipsoid using $N_{\Delta}=2528,\, N_{\Delta}=6152$ respectively.}
\label{Fig}
\end{figure}
To mitigate this effect, we propose an alternative approach. It is well known that under a proper map, the operations (e.g., interpolation, numerical differentiation, and quadrature) on a triangular
element can be performed on the reference square. Below we will state and prove a theorem that we hope will pave the way for developing another powerful numerical integration method for closed surfaces. Prior to presenting the theorem, we first recall the modulus of continuity from \cite{TIMAN1963} (Chapter 3):

Let $\delta_1 \geq 0$ and $\delta_2 \geq 0$ be given. The modulus of continuity for a continuous function $f$ is represented by $\omega\left(f;\delta_{1},\delta_{2}\right)$, which is defined as:
\begin{equation}
    \omega\left(f;\delta_{1},\delta_{2}\right)=\sup_{|x_{1}-x_{2}|\leq \delta_{1},\, x_{1},x_{2}\in [-1,1]} \quad\sup_{|y_{1}-y_{2}|\leq \delta_{2},\, y_{1},y_{2}\in  [-1,1]} |f\left(x_{1},x_{2}\right)-f\left(y_{1},y_{2}\right)|.
\end{equation}
The function $\omega\left(f;\delta_{1},\delta_{2}\right)$ is semi-additive, i.e,
\begin{equation*}
 \omega\left(\delta_{1}+\delta_{2}, \lambda_{1}+\lambda_{2}\right)\leq \omega\left(\delta_{1}+\lambda_{1}\right)+\omega\left(\delta_{2}+\lambda_{2}\right).
\end{equation*}

To address Runge's phenomenon, we propose the use of \emph{Chebyshev--Lobatto nodes} as the interpolation points. Specifically, when working within a one-dimensional interpolation domain of $[-1,1]$, we define our new set of interpolation points as follows
\begin{equation*}
   \Cheb_n = \li\{ \cos\Big(\frac{k\pi}{n}\Big) : 0 \leq k \leq n\re\}\, .
\end{equation*}
These points have a slow increase in the Lebesgue constant as $n \to \infty$, which can be estimated using the following expression
\begin{equation}\label{eq:LEB}
 \Lambda(\Cheb_n)=\frac{2}{\pi}\big(\log(n+1) + \gamma +  \log(8/\pi)\big) + \mathcal{O}(1/n^2)\, ,
\end{equation}
where $\gamma \approx  0.5772$ is the Euler-Mascheroni constant, see \cite{bernstein1931,ehlich,brutman}.
To generate \emph{Chebyshev--Lobatto nodes} on the square  $[-1,1]^{2}$ we consider the tensorial product $\Cheb_n\times\Cheb_n$, yielding  $\Lambda(\Cheb_n\times\Cheb_n) =
 \Lambda(\Cheb_n)^2$ \cite{cohen3}. Now, we can state the following result:

\begin{theorem}\label{important.ftheorem}
For $f\in \mathbf{C}^{k}\left([-1,1]^{2}\right)$, let $Q_{f,n}\left(x,y\right)$ be the interpolant of function $f$ in the \emph{Chebyshev--Lobatto nodes} $\Cheb_n\times\Cheb_n$, we have the estimate
\begin{equation}\label{important.result}
    \norm{f\left(x,y\right)-Q_{f,n}\left(x,y\right)}_{L^{\infty}\left([-1,1]^{2}\right)}\leq \Tilde{C} \ln^{2}\left(n\right)n^{-k} \left(\omega\left(\partial_{x}^{k}f\left(x,y\right);\frac{1}{n},0\right)+\omega\left(\partial_{y}^{k}f\left(x,y\right);0,\frac{1}{n}\right)\right).
\end{equation}
\end{theorem}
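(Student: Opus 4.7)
The plan is to bound the interpolation error via a classical Lebesgue inequality combined with a Jackson-type estimate, exploiting the tensor-product structure of $\Cheb_n\times\Cheb_n$. Let $Q_n^x$ denote univariate Chebyshev--Lobatto interpolation acting in the $x$-variable (with the $y$-variable frozen), and define $Q_n^y$ analogously. Because the nodal grid is a tensor product, these operators commute and $Q_{f,n}=Q_n^xQ_n^yf=Q_n^yQ_n^xf$. The first step is to split the error through the telescoping identity
\begin{equation*}
    f-Q_{f,n}\;=\;(I-Q_n^x)f\;+\;Q_n^x\bigl((I-Q_n^y)f\bigr),
\end{equation*}
so that by the triangle inequality and the operator norm bound $\|Q_n^x\|_{L^\infty\to L^\infty}\le 1+\Lambda(\Cheb_n)$,
\begin{equation*}
    \|f-Q_{f,n}\|_{L^\infty([-1,1]^2)}\;\le\;\|(I-Q_n^x)f\|_{L^\infty}\;+\;\bigl(1+\Lambda(\Cheb_n)\bigr)\,\|(I-Q_n^y)f\|_{L^\infty}.
\end{equation*}

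The second step is to apply to each of these two slice-wise errors the standard univariate Lebesgue inequality $\|(I-Q_n^x)g\|_\infty\le(1+\Lambda(\Cheb_n))\,E_n(g)$, where $E_n(g)$ is the best uniform polynomial approximation of degree $n$ on $[-1,1]$, followed by the Jackson--Favard estimate for $C^k$ functions,
\begin{equation*}
    E_n(g)\;\le\;C\,n^{-k}\,\omega\bigl(g^{(k)};1/n\bigr).
\end{equation*}
Applied uniformly in the frozen variable, this gives $\|(I-Q_n^x)f(\cdot,y)\|_{L^\infty_x}\le C(1+\Lambda(\Cheb_n))\,n^{-k}\,\omega(\partial_x^k f;1/n,0)$ and an analogous estimate in $y$, where the semi-additivity of the modulus of continuity together with the definition of $\omega(\cdot;\delta_1,\delta_2)$ on the whole square ensures the slice-wise moduli are dominated by the bivariate ones.

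Combining the two slice estimates with the preceding telescoping bound produces a factor $(1+\Lambda(\Cheb_n))^2$ on the $y$-term and $(1+\Lambda(\Cheb_n))$ on the $x$-term; using \eqref{eq:LEB}, both are $\mathcal{O}(\ln^2 n)$. Absorbing constants into a single $\widetilde C$ yields precisely
\begin{equation*}
    \|f-Q_{f,n}\|_{L^\infty([-1,1]^2)}\;\le\;\widetilde C\,\ln^2(n)\,n^{-k}\Bigl(\omega\bigl(\partial_x^k f;1/n,0\bigr)+\omega\bigl(\partial_y^k f;0,1/n\bigr)\Bigr).
\end{equation*}

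The main technical obstacle is not the algebraic telescoping, which is routine, but rather justifying that the univariate Jackson inequality applies uniformly in the frozen variable and that the slice-wise moduli are controlled by the full bivariate moduli $\omega(\partial_x^k f;1/n,0)$ and $\omega(\partial_y^k f;0,1/n)$; this is where the hypothesis $f\in \mathbf{C}^k([-1,1]^2)$ (so that $\partial_x^k f$ and $\partial_y^k f$ are continuous on the compact square and hence have a well-defined, finite bivariate modulus of continuity) is essential. Once that uniformity is in hand, everything else reduces to applying the logarithmic growth of $\Lambda(\Cheb_n)$ recalled in \eqref{eq:LEB}.
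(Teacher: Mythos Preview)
Your argument is correct and reaches the same conclusion, but it proceeds along a genuinely different route from the paper. The paper works \emph{bivariately} throughout: it introduces the full tensor interpolation operator $\mathbf{I}_{[-1,1]^2}$ with operator norm $\Lambda(\Cheb_n)^2$, applies the single Lebesgue inequality
\[
\|f-Q_{f,n}\|_{L^\infty}\le\bigl(1+C_\Lambda\ln^2(n)\bigr)\,\|f-Q_{f,n}^*\|_{L^\infty},
\]
and then invokes a \emph{multivariate} Jackson inequality (from Timan) to bound the best bivariate approximation error directly by the sum $n^{-k}\bigl(\omega(\partial_x^k f;1/n,0)+\omega(\partial_y^k f;0,1/n)\bigr)$, using semi-additivity of the modulus only to pass from step $2/n$ to $1/n$.

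You instead exploit the tensor-product structure explicitly: a telescoping split $f-Q_{f,n}=(I-Q_n^x)f+Q_n^x(I-Q_n^y)f$, followed by the \emph{univariate} Lebesgue and Jackson--Favard inequalities applied slice-wise, with the observation that the paper's definition of $\omega(\cdot;\delta_1,0)$ is precisely the supremum over $y$ of the one-variable moduli. This is more elementary in that it requires only classical one-dimensional approximation theory and avoids quoting a multivariate Jackson theorem; the price is a slightly asymmetric estimate (one Lebesgue factor on the $x$-term, two on the $y$-term), which you correctly absorb into the common $\ln^2(n)$ since $\ln n\le\ln^2 n$ for $n\ge e$. The paper's argument is shorter and more symmetric, but yours makes clearer where each logarithm actually comes from.
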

\begin{remark}
Based on Theorem \ref{important.ftheorem} in future work, we plan to incorporate recent advances in multivariate interpolation \cite{PIP1,PIP2,MIP,IEEE}, providing a stable approach suppressing Runge's phenomenon by interpolating the map $\psi$ in proper chosen, \emph{transformed Chebyshev-Lobatto nodes}.
\end{remark}
 \begin{proof}[Poof of Theorem \ref{important.ftheorem}]
We consider the interpolation operator $$\mathbf{I}_{[-1,1]^{2}}:\mathbf{C}^{0}\big([-1,1]^{2},\mathbb{R}\big)\rightarrow \mathcal{P}_{2,n} $$
in the Chebyshev--Lobatto nodes $\Cheb_n\times\Cheb_n$, with $\mathcal{P}_{2,n}$ being the space of bivariate polynomials of maximum degree $n$. Its operator norm $\|\mathbf{I}_{[-1,1]^{2}}\| =  \Lambda(\Cheb_n)^2 \in \mathcal{O}(\ln^{2})$ is given by the Lebesgue constant \cite{cohen3}.  
Let us denote with $Q_{f,n}^{*}\left(x,y\right)$ the best polynomial approximation of degree $n.$  The identity theorem for polynomials yields $Q_{f,n}^{*}\left(x,y\right)=\mathbf{I}_{[-1,1]^{2}}\left(Q_{f,n}^{*}\left(x,y\right)\right)$, by making use of Lemma 7.4 in \cite{MIP}, we have
\begin{align}
    &\norm{f\left(x,y\right)-Q_{f,n}\left(x,y\right)}_{L^{\infty}\left([-1,1]^{2}\right)}\nonumber\\
    &\leq \norm{Q_{f,n}^{*}\left(x,y\right)-f\left(x,y\right)}_{L^{\infty}\left([-1,1]^{2}\right)}+ \norm{Q_{f,n}^{*}\left(x,y\right)-Q_{f,n}\left(x,y\right)}_{L^{\infty}\left([-1,1]^{2}\right)}\nonumber\\ 
    &\leq \norm{Q_{f,n}^{*}\left(x,y\right)-f\left(x,y\right)}_{L^{\infty}\left([-1,1]^{2}\right)}+ \norm{\mathbf{I}_{[-1,1]^{2}}\left(Q_{f,n}^{*}\left(x,y\right)-f\left(x,y\right)\right)}_{L^{\infty}\left([-1,1]^{2}\right)}\nonumber\\
    &\leq \left(1+C_{\Lambda} \ln^{2}\left(n\right)\right) \norm{f\left(x,y\right)-Q_{f,n}^{*}\left(x,y\right)}_{L^{\infty}\left([-1,1]^{2}\right)}.\label{best.app2d}
\end{align}
At this point, the multivariate version of Jackson's inequality and semi-additivity of the modulus \cite{TIMAN1963}, gives
\begin{align}
  \norm{f\left(x,y\right)-Q_{f,n}^{*}\left(x,y\right)}_{L^{\infty}\left([-1,1]^{2}\right)}&\leq C \left(2^{k}n^{-k}\omega\left(\partial_{x}^{k}f\left(x,y\right);\frac{2}{n},0\right)+2^{k}n^{-k}\omega\left(\partial_{y}^{k}f\left(x,y\right);0,\frac{2}{n}\right)\right)\nonumber \\
  &\leq C2^{k+1}n^{-k} \left(\omega\left(\partial_{x}^{k}f\left(x,y\right);\frac{1}{n},0\right)+\omega\left(\partial_{y}^{k}f\left(x,y\right);0,\frac{1}{n}\right)\right).\label{jack.ineq}
\end{align}
Combining inequality \eqref{jack.ineq} with \eqref{best.app2d}, we obtain \eqref{important.result}. 
\end{proof}

In light of the
multivariate extension of Jackson’s theorem  \cite{bagby2002multivariate}, we have the following result.
\begin{corollary}\label{corollary}
Let $f\in \mathbf{C}^{k}\left([-1,1]^{2}\right)$,  we have the estimate
\begin{equation}
    \norm{\partial_{x}f\left(x,y\right)-\partial_{x}Q_{f,n}\left(x,y\right)}_{L^{\infty}\left([-1,1]^{2}\right)}\leq  C\left(f;n\right)\ln^{2}\left(n\right)n^{-(k-1)}
\end{equation}
and similarly
\begin{equation}
    \norm{\partial_{y}f\left(x,y\right)-\partial_{y}Q_{f,n}\left(x,y\right)}_{L^{\infty}\left([-1,1]^{2}\right)}\leq C\left(f;n\right)\ln^{2}\left(n\right)n^{-(k-1)},
\end{equation}
where C is a suitable constant (with n), dependent on $f(x,y)$ and $k.$
\end{corollary}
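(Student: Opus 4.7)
The plan is to mirror the proof of \Cref{important.ftheorem}, introducing the derivative $\partial_x$ into each step; the $\partial_y$ statement then follows by swapping the roles of $x$ and $y$. Let $Q_{f,n}^{*} \in \mathcal{P}_{2,n}$ be a simultaneous best polynomial approximation to $f$, chosen so that the multivariate version of Jackson's theorem in \cite{bagby2002multivariate} provides both
\[
\|f - Q_{f,n}^{*}\|_{L^{\infty}([-1,1]^{2})} \leq C_{1} \, n^{-k} \, \omega\!\left(\partial_x^{k}f;\tfrac{1}{n},0\right),
\]
\[
\|\partial_x f - \partial_x Q_{f,n}^{*}\|_{L^{\infty}([-1,1]^{2})} \leq C_{2} \, n^{-(k-1)} \, \omega\!\left(\partial_x^{k}f;\tfrac{1}{n},0\right).
\]

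The triangle inequality
\[
\|\partial_x f - \partial_x Q_{f,n}\|_{L^{\infty}} \leq \|\partial_x f - \partial_x Q_{f,n}^{*}\|_{L^{\infty}} + \|\partial_x Q_{f,n}^{*} - \partial_x Q_{f,n}\|_{L^{\infty}}
\]
then reduces the proof to controlling the two summands. The first is already of the claimed order $n^{-(k-1)}$ by the second simultaneous-approximation estimate above. For the second, I use that Chebyshev--Lobatto tensor-product interpolation reproduces $\mathcal{P}_{2,n}$, so that
\[
Q_{f,n}^{*} - Q_{f,n} = \mathbf{I}_{[-1,1]^{2}}\!\left(Q_{f,n}^{*} - f\right),
\]
and then combine the Lebesgue constant estimate $\|\mathbf{I}_{[-1,1]^{2}}\| = \Lambda(\Cheb_{n})^{2} \in \mathcal{O}(\ln^{2}n)$ from \eqref{eq:LEB} with a bivariate Markov-type inverse inequality on the polynomial $Q_{f,n}^{*} - Q_{f,n} \in \mathcal{P}_{2,n}$ to pass from the $L^{\infty}$ bound on the polynomial to an $L^{\infty}$ bound on $\partial_x(Q_{f,n}^{*} - Q_{f,n})$. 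The $n$-factor introduced by the inverse inequality is absorbed into the $n$-dependent constant $C(f;n)$ of the statement, and adding the two summands yields the advertised rate $\ln^{2}(n)\, n^{-(k-1)}$.

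The principal technical hurdle is securing the simultaneous-approximation polynomial $Q_{f,n}^{*}$ that controls both $f$ and $\partial_x f$ at their respective Jackson rates with a \emph{single} polynomial: the univariate precedent is due to Timan, and the bivariate refinement used here must be extracted from the multivariate Jackson theorem of \cite{bagby2002multivariate} cited immediately before the corollary. Once this ingredient is in place, the rest is a routine adaptation of the argument establishing \Cref{important.ftheorem}, with differentiation tracked through each step.
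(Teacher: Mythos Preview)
The paper supplies no proof for this corollary: it is stated immediately after the sentence ``In light of the multivariate extension of Jackson's theorem \cite{bagby2002multivariate}, we have the following result,'' and nothing further is argued. Your sketch is therefore not competing with a written proof but rather filling in the details the paper omits, and the ingredients you choose---the simultaneous-approximation polynomial from \cite{bagby2002multivariate}, the triangle inequality, the reproduction property $Q_{f,n}^{*}-Q_{f,n}=\mathbf{I}_{[-1,1]^{2}}(Q_{f,n}^{*}-f)$, and the Lebesgue-constant bound from \eqref{eq:LEB}---are exactly the ones the paper's phrasing points to.

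One point deserves sharpening. The Markov inequality on $[-1,1]$ costs a factor $n^{2}$, not $n$: for $p\in\mathcal{P}_{2,n}$ one has $\|\partial_{x}p\|_{L^{\infty}([-1,1]^{2})}\le n^{2}\|p\|_{L^{\infty}([-1,1]^{2})}$. Hence your second summand is bounded by
\[
n^{2}\,\Lambda(\Cheb_{n})^{2}\,\|f-Q_{f,n}^{*}\|_{L^{\infty}}\;\lesssim\;\ln^{2}(n)\,n^{-(k-2)},
\]
one power of $n$ worse than the displayed rate $\ln^{2}(n)\,n^{-(k-1)}$. You correctly observe that the statement allows $C(f;n)$ to depend on $n$, so this excess factor can be absorbed and the argument is formally valid; but be aware that this makes the exponent $-(k-1)$ in the corollary somewhat nominal. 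The paper does not comment on this issue either.
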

\begin{remark}
If $f\in \mathbf{C}^{k}\left([-1,1]^{2}\right)$ satisfy the Dini–Lipschitz criterion in the sense that
\begin{equation*}
    \lim_{n\longrightarrow\infty}\ln^{2}\left(n\right)\omega\left(f\left(x,y\right);\frac{1}{n},\frac{1}{n}\right)=0\,.
\end{equation*}
Then Eq.~\eqref{important.result} guarantees uniform convergence
\begin{equation*}
   \lim_{n\longrightarrow\infty} \norm{f\left(x,y\right)-Q_{f,n}\left(x,y\right)}_{L^{\infty}\left([-1,1]^{2}\right)}=0.
\end{equation*}
Additionally, it is easy to see that, for $f\in\mathbf{C}^{k}(T)$, we have
\begin{equation}
   \norm{f\left(x,y\right)-Q_{f,n}\left(x,y\right)}_{L^{\infty}\left(T\right)}\leq c\norm{f\left(x,y\right)-Q_{f,n}\left(x,y\right)}_{L^{\infty}\left([-1,1]^{2}\right)}=\mathcal{O}\left(n^{-k}\right).
\end{equation}
\end{remark}
It is anticipated that with this initiative, we can develop an efficient numerical integration method for closed surfaces. This will be addressed in a forthcoming work.

\section*{Declaration of Competing Interest}
The authors declare that they have no known competing financial interests or personal relationships that could have appeared to influence the work reported in this paper.

\section*{Acknowledgement}
We deeply acknowledge Paul Breiding for many inspiring comments and helpful suggestions.

The research of Gentian Zavalani and Michael Hecht was partially funded by the Center of Advanced Systems Understanding (CASUS) which is financed by Germany’s Federal Ministry of Education and Research (BMBF) and by the Saxon Ministry for Science, Culture, and Tourism (SMWK) with tax funds on the basis of the budget approved by the Saxon State Parliament. 

The research of Elima Shehu was funded by the Deutsche Forschungsgemeinschaft (DFG, German Research Foundation), Projektnummer 445466444.

%%%%%%%%%%%%
\bigskip
%%%%%%%%%%%%

\bibliographystyle{alpha}
\bibliography{literatur.bib}

\end{document}